\documentclass[11pt,intlimits]{article}
\usepackage[centertags]{amsmath}
\usepackage[english,german,french]{babel}
\usepackage{amsfonts}
\usepackage{dsfont}
\usepackage{amssymb,amssymb,amscd}
\usepackage{amsthm}
\usepackage[T1]{fontenc}
\usepackage{mathbbol} 
\usepackage{a4wide}
\usepackage{mathrsfs}

\usepackage{color}
\usepackage{stackrel}
\usepackage{authblk}

\usepackage{multicol}

\usepackage{tikz-cd}
\usepackage{tikz,lipsum}
\usetikzlibrary{shapes}
\usetikzlibrary{decorations.pathmorphing}
\usetikzlibrary{arrows}
\usetikzlibrary{decorations.markings}
\usetikzlibrary{backgrounds,calc}
\tikzset{middlearrow/.style={
        decoration={markings,
            mark= at position 0.6 with {\arrow{#1}} ,
        },
        postaction={decorate}
    }
}
\usepackage{capt-of}
\usepackage{environ}
\makeatletter
\newsavebox{\measure@tikzpicture}
\NewEnviron{scaletikzpicturetowidth}[1]{t%
  \def\tikz@width{#1}%
  \begin{lrbox}{\measure@tikzpicture}%
  \BODY
  \end{lrbox}%
  \pgfmathparse{#1/\wd\measure@tikzpicture}%
  \BODY
}
\makeatother

\usepackage{dynkin-diagrams}
\makeatletter
\newcommand{\extraNode}[6]%
{%
\dynkinPlaceRootRelativeTo{#1}{#2}{#3}{#4}{#5}
\dynkinDefiniteSingleEdge{#1}{#2}
\dynkinRootMark{o}{#1}
\advance\dynkin@nodes by 1
\dynkinLabelRoot{#1}{#6} 
}%
\newcommand{\extraDotNode}[6]%
{%
\dynkinPlaceRootRelativeTo{#1}{#2}{#3}{#4}{#5}
\dynkinIndefiniteSingleEdge{#1}{#2}
\dynkinRootMark{o}{#1}
\advance\dynkin@nodes by 1
\dynkinLabelRoot{#1}{#6} 
}%
\makeatother
\tikzset{/Dynkin diagram,mark=o,edge length=.5cm}

\graphicspath{{Fig/}}

\newtheorem{theorem}{Theorem}
\newtheorem{proposition}[theorem]{Proposition}%
\newtheorem{lemma}[theorem]{Lemma}
\theoremstyle{thmstyletwo}%
\newtheorem{example}{Example}%
\newtheorem{remark}{Remark}%
\theoremstyle{thmstylethree}%
\newtheorem{definition}{Definition}

\newtheorem{conjecture}{Conjecture}

\usepackage{comment}
\usepackage{tikz-cd}
\usepackage{tikz}
\usetikzlibrary{graphs,quotes}
\usepackage[all]{xy}  

\date{}

\begin{document}
\selectlanguage{english}


\title{Components of $V(m\rho) \otimes V(n\rho)$}

\author[1,2]{Rekha Biswal}
\author[3]{Sam Jeralds}

\affil[1]{School of Mathematical Sciences, NISER, Bhubaneswar 752050, India}
\affil[2]{Homi Bhabha National Institute, Training School Complex, Anushakti Nagar, Mumbai 400094, India}
\affil[3]{University of Sydney, Camperdown NSW 2006, Australia}






\maketitle

\begin{abstract}
Let $\mathfrak{g}$ be a symmetrizable Kac-Moody Lie algebra and let $\rho$ denote the sum of the fundamental weights. The irreducible highest weight representations $V(m\rho)$ occupy a distinguished position in representation theory due to their rich symmetry and geometric significance. In this paper, we study the tensor products
\[
V(m\rho)\otimes V(n\rho), \quad m,n \in \mathbb{N},
\]
and investigate the structure of their irreducible decompositions. Motivated by the classical conjecture of Kostant, which predicts a highly structured behavior in simpler settings, we propose a general framework describing the irreducible components appearing in such tensor products for finite-dimensional semisimple or affine Kac-Moody Lie algebras $\mathfrak{g}$. Our results identify a family of dominant weights governing the decomposition and provide criteria for their occurrence. This work extends the scope of Kostant-type phenomena and reveals new structural patterns in tensor products associated with multiples of the Weyl vector.
\end{abstract}

\section{Introduction}
\subsection{Kostant's $V(\rho) \otimes V(\rho)$ conjecture}
The representation theory of complex semisimple Lie algebras is a central area of modern mathematics, with deep connections to algebraic geometry, combinatorics, and mathematical physics. A fundamental problem in this theory is to understand the decomposition of tensor products of irreducible representations into irreducible components. While general tools such as the Weyl character formula and crystal bases provide theoretical answers, explicit and uniform descriptions of tensor product decompositions remain difficult to obtain in most cases.

Let $\mathfrak{g}$ be a complex semisimple Lie algebra with root system $\Phi$, and let $\rho$ denote the half-sum of positive roots. For each dominant integral weight $\lambda \in P^+$, we denote by $V(\lambda)$ the irreducible highest weight representation of highest weight $\lambda$. Among these, the representations $V(m\rho)$, for $m \in \mathbb{N}$, are of particular importance. They arise naturally in harmonic analysis, the study of primitive ideals, and the geometry of flag varieties, and reflect deep symmetries inherent in the structure of $\mathfrak{g}$.

A classical conjecture of Kostant \cite{BZ} suggests that tensor products involving representations related to $\rho$ exhibit remarkable regularity: all irreducible summands which can appear must actually appear.

\begin{conjecture} \label{KosConj}
    Let $\mathfrak{g}$ be a complex semisimple Lie algebra, and $\lambda \in P^+$ a dominant integral weight such that $\lambda \leq 2\rho$ in the dominance order. Then $V(\lambda) \subseteq V(\rho) \otimes V(\rho)$. 
\end{conjecture}

This conjecture was first proven for $\mathfrak{g}=sl(n+1)$ by Berenstein-Zelevinsky \cite{BZ}, where the conjecture first appeared in the literature. Chiriv\'i-Kumar-Maffei \cite{CKM} showed that this conjecture holds for all $\mathfrak{g}$, up to a saturation factor (cf. Definition \ref{satfact}), recovering the theorem for $sl(n+1)$. The conjecture was later extended by Kumar and the second author to affine Kac-Moody Lie algebras (where $\rho$ is interpreted as the sum of the fundamental weights) and shown to hold for $\mathfrak{g}=\widehat{sl}(n+1)$, and again for all affine Kac-Moody Lie algebras up to a saturation factor.

To date, Conjecture \ref{KosConj} remains open for all other classical simple Lie algebras, having been checked directly for $\mathfrak{g}$ of exceptional types. 


\subsection{A generalization to unequal parameters}

By the same proofs as in \cite{CKM, JK}, Conjecture \ref{KosConj} can easily be extended to tensor products of the form $V(n\rho) \otimes V(n\rho)$ for any integer $n \geq 1$, with the condition that $\lambda \leq 2n\rho$ being sufficient. We refer to this as the ``equal parameter" or ``diagonal" case. 

In this note, we propose a generalization of Conjecture \ref{KosConj} to include the \textit{unequal parameter} or non-diagonal case by considering tensor products of the form 
\[
V(m\rho)\otimes V(n\rho)
\]
for $m \geq n \geq 0$. In order to pose a generalization of Kostant's conjecture that covers the non-diagonal case, we first need a suitable replacement for the proposed sufficient condition $\lambda \leq 2\rho$. The first natural guess, requiring $\lambda \leq (m+n)\rho$, turns out to be insufficient, as in the following example. 

\begin{example}
   Let $\mathfrak{g}$ be of type $B_2$. Then there are sixteen dominant integral weights $\lambda \leq 7\rho$ such that $V(\lambda) \not \subseteq V(5\rho) \otimes V(2\rho)$. One such example is $\lambda=\omega_2$. 
\end{example}

Instead, we are motivated by a different necessary condition on tensor product decompositions; namely, if $V(\nu) \subseteq V(\lambda) \otimes V(\mu)$ for dominant integral weights $\lambda, \mu, \nu$, then necessarily 
$$
\nu=\lambda+\beta
$$
where $\beta$ is some weight appearing in the representation $V(\mu)$. By (\cite{CKM}, Proposition 9), a dominant integral weight $\lambda$ satisfies $\lambda \leq 2\rho$ if and only if $\lambda=\rho+\beta$ where $\beta$ is a weight of $V(\rho)$. Thus, we make the following modified conjecture, which is the topic of this note. 

\begin{conjecture} \label{RekhaConj}
Let $\mathfrak{g}$ be a simple Lie algebra or untwisted affine Kac-Moody Lie algebra. Let $m \geq n \geq 0$ be two nonnegative integers, and let $\lambda \in P^+$ be a dominant integral weight such that 
$$
\lambda=m\rho+\beta
$$
for some weight $\beta$ of the representation $V(n\rho)$. Then $V(\lambda) \subseteq V(m\rho) \otimes V(n\rho)$. 
\end{conjecture}

That is, the dominant weights occurring in the tensor product appear to be obtained simply by translating the weight set of $V(n\rho)$ by $m\rho$ and considering the resulting dominant weights. Such a description is unexpected to hold for tensor products at large. 

\begin{remark}
   Note that the statement of Conjecture \ref{RekhaConj} is equally valid for $\mathfrak{g}$ of any symmetrizable Kac-Moody type; we restrict ourselves only to the finite-dimensional simple Lie algebras or untwisted affine Kac-Moody Lie algebras for familiarity, and as our methods at present rely on these restrictions.  
\end{remark}

As initial evidence to this conjecture, we adapt methods from \cite{CKM} using the saturated tensor cone of $\mathfrak{g}$ to prove the following theorem, which is the main result of this note. 

\begin{theorem} \label{MainTheorem}
\begin{enumerate}
    \item Let $\mathfrak{g}$, $m \geq n \geq 0$, and $\lambda \in P^+$ be as in Conjecture \ref{RekhaConj}. Let $d \geq 1$ be a saturation factor for $\mathfrak{g}$. Then 
$$
V(d\lambda) \subseteq V(dm\rho) \otimes V(dn\rho).
$$
In particular, if $\mathfrak{g}=sl_{n+1}(\mathbb{C})$, we have that $V(\lambda) \subseteq V(m\rho) \otimes V(n\rho)$.

\item If $\mathfrak{g}=A^{(1)}_1=\widehat{sl}_2(\mathbb{C})$, then $V(\lambda) \subseteq V(m\rho) \otimes V(n\rho)$. 
\end{enumerate}
\end{theorem}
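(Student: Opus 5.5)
For part (1) I plan to follow the strategy of \cite{CKM}, adapted to unequal parameters. The saturated tensor cone is described by the Belkale--Kumar (or Ressayre) inequalities. A triple $(\lambda_1,\lambda_2,\mu)$ of dominant weights, with $\lambda_1+\lambda_2-\mu$ in the root lattice, lies in the saturated tensor semigroup exactly when, for every maximal parabolic $P$ (every $\omega_k$) and every triple $(u,v,w)$ of minimal coset representatives in $W/W_P$ with the Belkale--Kumar Schubert product condition $[X_u]\odot_0[X_v]=[X_w]$ (degree-zero coefficient), one has
\[
\omega_k(u^{-1}\lambda_1+v^{-1}\lambda_2-w^{-1}\mu)\geq 0 .
\]
The definition of a saturation factor $d$ (Definition \ref{satfact}) is that such triples satisfy $V(d\mu)\subseteq V(d\lambda_1)\otimes V(d\lambda_2)$. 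So it suffices to check these inequalities for $\lambda_1=m\rho$, $\lambda_2=n\rho$, $\mu=\lambda=m\rho+\beta$. The root lattice condition holds because $\beta$ is a weight of $V(n\rho)$, so $\beta\in n\rho+Q$.

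Rewrite the inequality as
\[
m\,\omega_k(u^{-1}\rho-w^{-1}\rho)+n\,\omega_k(v^{-1}\rho)\geq \omega_k(w^{-1}\beta).
\]
Because $\lambda$ is dominant, $w^{-1}\lambda\le \lambda$, and hence $-\omega_k(w^{-1}\lambda)\geq -\omega_k(\lambda)$ (Kumar's inequality). This kind of estimate, combined with the codimension identity $\ell(u)+\ell(v)=\ell(w)$ and the Belkale--Kumar "$\odot_0$" condition
\[
\chi_w-\chi_u-\chi_v=0, \qquad \chi_w=\rho-2\rho^L+w^{-1}\rho ,
\]
is what \cite{CKM} use. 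Paired with $\omega_k$, the condition gives
\[
\omega_k(u^{-1}\rho)+\omega_k(v^{-1}\rho)=\omega_k(w^{-1}\rho)+\omega_k(\rho-2\rho^L),
\]
and $\omega_k(\rho-2\rho^L)$ is a fixed quantity.

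The plan is to split
\[
\lambda_1+\lambda_2-\mu=(m-n)\rho+(n\rho+n\rho-(n\rho+\beta))
\]
and to use convexity, reducing to the diagonal case. Write $\mu=(m-n)\rho+\mu'$ with $\mu'=n\rho+\beta$. The weight $\mu'$ is dominant (it is $\geq$ in the appropriate sense), since $\mu'=\lambda-(m-n)\rho$ and $\beta\ge -n\rho$ coordinatewise after $W$-conjugation.

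A cleaner route is to use that the saturated tensor cone is a convex cone, closed under addition. The triple $(m\rho,n\rho,m\rho+\beta)$ would be a sum of $((m-n)\rho,0,(m-n)\rho)$, which is trivially in the cone, and $(n\rho,n\rho,n\rho+\beta)$. The second triple lies in the cone by the diagonal case of \cite{CKM}, provided $n\rho+\beta$ is dominant, since by (\cite{CKM}, Proposition 9) one has $n\rho+\beta\le 2n\rho$.

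The main obstacle is that $n\rho+\beta$ need not be dominant even though $m\rho+\beta$ is. If it is dominant we are done. Otherwise I would first replace $\beta$ by another weight $\beta'$ of $V(n\rho)$, chosen so that $n\rho+\beta'$ is dominant and $m\rho+\beta=(m-n)\rho+\gamma+(n\rho+\beta')$ with $((m-n)\rho,0,\ldots)$ adjusted accordingly. Failing that, I would verify the inequalities directly. For this I would use $\omega_k(w^{-1}\beta)\le\omega_k(\beta^+)$, where $\beta^+$ is the dominant conjugate, together with $\beta^+\le n\rho$ and the $\odot_0$ identity above, to bound the left side. The $sl_{n+1}$ statement then follows from Knutson--Tao saturation ($d=1$), and the affine case uses the affine cone of \cite{JK} analogously.

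For part (2), $\widehat{sl}_2$, I would argue directly with the two fundamental weights and $\delta$. The weights of $V(n\rho)$ are explicitly describable via the string function and the parabola $\delta$-bounds. I would then exhibit $V(\lambda)$ in $V(m\rho)\otimes V(n\rho)$ by induction on $n$, using that $V(\rho)\otimes V(\rho)$ is known from \cite{JK}, or by a Littelmann-path/crystal argument producing an explicit $\lambda$-highest-weight path $m\rho * \pi$ with $\pi$ a path in $B(n\rho)$ ending at $\beta$ that stays in the dominant chamber after translation by $m\rho$. This avoids the saturation factor.
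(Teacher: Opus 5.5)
\textbf{Part (1).} This part is recoverable, but not along the route you lead with, and your sketch never handles the one step that is new compared with the diagonal case.

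Adding cone points stalls exactly where you say. The fallback of replacing $\beta$ by some $\beta'$ cannot work: $(\mu,0,\nu)\in\Gamma_2(\mathfrak{g})$ forces $\nu=\mu$, so no summand of the form $((m-n)\rho,0,(m-n)\rho+\gamma)$ with $\gamma\neq0$ exists. The fix is to add the \emph{inequalities} rather than cone points. They are linear in the triple, and only $(m\rho,n\rho,\lambda)$ itself has to be dominant for Theorem~\ref{Ineq} to apply. Explicitly (this is the paper's proof),
\[
m\rho(ux_k)+n\rho(vx_k)-\lambda(wx_k)=nE+(n\rho-w^{-1}\beta)(x_k)+(m-n)\bigl(\rho(ux_k)-\rho(wx_k)\bigr),
\]
where $E=(u^{-1}\rho+v^{-1}\rho-w^{-1}\rho-\rho)(x_k)\geq0$ by Proposition~\ref{BaseIneq}.
\begin{itemize}
\item The first two terms form the inequality for the possibly non-dominant triple $(n\rho,n\rho,n\rho+\beta)$. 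They are nonnegative regardless of dominance, since $w^{-1}\beta\leq n\rho$.
\item The last term is the missing ingredient. The paper uses that $c^w_{u,v}\neq0$ forces $u\preceq w$ in Bruhat order, hence $ux_k-wx_k\in Q^\vee_+$ (Lemma~\ref{ConeTranslate}).
\item With your own tools you could instead write $(u^{-1}\rho-w^{-1}\rho)(x_k)=E+(\rho-v^{-1}\rho)(x_k)\geq0$, since $\rho-v^{-1}\rho\in Q_+$. That is where a Kumar-type bound is genuinely needed: applied to $\rho$ and $v$. Applying it to $\lambda$ and $w$, as you suggest, throws away the $w$-dependence that has to cancel against $u$.
\end{itemize}
Your displayed $\odot_0$ identity is the correct one, since $\rho^L(x_k)=0$. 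However, it does not follow with that sign from $\chi_w-\chi_u-\chi_v=0$ and your formula for $\chi_w$; test $u=e$, $v=w$.

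\textbf{Part (2).} Here the proposal contains no actual argument.

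Induction on $n$ from $V(\rho)\otimes V(\rho)$ is not a mechanism. $V(n\rho)$ is only one summand of $V((n-1)\rho)\otimes V(\rho)$, so a copy of $V(\lambda)$ in $V(m\rho)\otimes V((n-1)\rho)\otimes V(\rho)$ need not lie in $V(m\rho)\otimes V(n\rho)$. Splitting $(m\rho,n\rho,\lambda)$ additively inside the tensor semigroup requires both pieces to be dominant. That is the obstacle from part (1), now with no linearity to absorb it.

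The path-model version only restates the claim. By Littelmann's rule, a path in $B(n\rho)$ ending at $\beta$ and staying $m\rho$-dominant exists if and only if $V(m\rho+\beta)\subseteq V(m\rho)\otimes V(n\rho)$. You give no construction, and this is delicate in affine type because $\beta$ can lie arbitrarily far down a $\delta$-string.

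What you are missing is how the paper controls the $\delta$-direction. By the Goddard--Kent--Olive construction, $L_0^{GKO}$ commutes with $\mathfrak{g}'$ and acts on any component $V(\lambda)$, $\lambda=m\rho+\beta$, by
\[
\frac{1}{2h^\vee}\Bigl(\frac{(m\rho|(m+2)\rho)}{m+1}+\frac{(n\rho|(n+2)\rho)}{n+1}-\frac{(\lambda|\lambda+2\rho)}{m+n+1}\Bigr).
\]
Expanding, this is strictly positive, using $n\rho-\beta\in Q^+$ and $(\beta|\beta)\leq(n\rho|n\rho)$. Virasoro unitarity then makes every $\delta$-string of components unbroken below its top (Proposition~\ref{deltaMax}). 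So it suffices to treat $\delta$-maximal $\beta$.

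For $\widehat{sl}_2$, Brown--Kumar describe the $\delta$-maximal components explicitly: the top of the string through $\lambda$ is $\lambda+\min\{k_1,k_2\}\delta$. Here $\lambda-m\rho+k_1\delta$ and $\lambda-n\rho+k_2\delta$ are $\delta$-maximal in $V(n\rho)$ and $V(m\rho)$ respectively. In this situation $k_1=0$ and $k_2\geq0$, so $V(\lambda)$ itself occurs.
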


In the setting of $\mathfrak{g}$ an affine Kac-Moody Lie algebra, the representation theory exhibits further structure absent in finite type. In particular, the interaction between affine tensor products and a Virasoro algebra symmetry arising from the Goddard-Kent-Olive (GKO) construction naturally leads to questions involving $\delta$-maximal weights and $\delta$-maximal components. Beyond $\mathfrak{g}=A_1^{(1)}$, we partially adapt techniques of \cite{JK} to the current setting to exhibit support for Conjecture \ref{RekhaConj} for $\mathfrak{g}=A_n^{(1)}$ (with no saturation factor). This encounters new difficulties arising from the $m \neq n$ assumption that is not present in the analogue of Kostant's conjecture considered in \cite{JK}.

\subsection{A connection to embeddings of tensor products}
We highlight a compatibility of Conjecture \ref{RekhaConj} and embeddings of tensor products of irreducible representations. The latter question originates in the study of fusion products of cyclic $\mathfrak{g}[t]$-modules, where $\mathfrak{g}[t]:=\mathfrak{g} \otimes_\mathbb{C} \mathbb{C}[t]$ is the current algebra associated to a simple Lie algebra. See \cite{BS, CFS, DP, LPP} for an overview of the following problem.

Let $\lambda, \mu, \lambda', \mu' \in P^+$ be four dominant integral weights such that $\lambda+\mu=\lambda'+\mu'$. Then we say that $(\lambda, \mu) \preceq (\lambda', \mu')$ if 
$$
\mathrm{min}\{\lambda(\beta^\vee), \mu(\beta^\vee)\} \leq \mathrm{min}\{\lambda'(\beta^\vee), \mu'(\beta^\vee)\}
$$
for all positive coroots $\beta^\vee$. The following conjecture relates this ordering on pairs of dominant weights to embeddings of tensor products.

\begin{conjecture} \label{TensorEmbed}
    Suppose $(\lambda, \mu) \preceq (\lambda', \mu')$. Then $V(\lambda) \otimes V(\mu) \subseteq V(\lambda') \otimes V(\mu')$. 
\end{conjecture}

The above ``Schur positivity" conjecture was originally formulated in \cite{LPP} for the Lie algebra $\mathfrak{sl}_n$. It was subsequently extended to arbitrary simple Lie algebras in \cite{CFS}, where it was verified in the case of $\mathfrak{sl}_3$ and for multiples of minuscule fundamental weights using crystal-theoretic methods. A weaker form of the conjecture, namely support containment, was established in \cite{DP}. Further progress in this direction was made in \cite{BS}, where support containment was proved for certain special cases of simply laced symmetrizable Kac–Moody algebras, again via crystal theory. Finally, in \cite{Speyer}, a complete proof of the conjecture for $\mathfrak{sl}_n$ was obtained through the construction of a new combinatorial model for Littlewood–Richardson coefficients. 

Now, suppose that Conjecture \ref{RekhaConj} holds (in either finite or affine types). For $m>n$, note that $(m\rho, n\rho) \preceq ((m-1)\rho, (n+1)\rho)$, thus Conjecture \ref{TensorEmbed} suggests that $V(m\rho) \otimes V(n\rho) \subseteq V((m-1)\rho) \otimes V((n+1)\rho)$. On the other hand, if $\lambda=m\rho+\beta$ for some weight $\beta$ of $V(n\rho)$ as in the assumptions of Conjecture \ref{RekhaConj}, then as $m>n$ we can write 
$$
\lambda=(m-1)\rho+(\rho+\beta)=:(m-1)\rho+\beta',
$$
where now $\beta'=\rho+\beta$ is a weight of $V((n+1)\rho)$. In particular, we have the weaker ``support containment" result for these pairs of weights, as in the following 

\begin{proposition}
    Assume Conjecture \ref{RekhaConj} holds, and that $m>n$. Then $V(\lambda) \subseteq V(m\rho) \otimes V(n\rho) \implies V(\lambda) \subseteq V((m-1)\rho) \otimes V((n+1)\rho).$
\end{proposition}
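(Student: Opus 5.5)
Assume $V(\lambda) \subseteq V(m\rho)\otimes V(n\rho)$ with $m>n$. The plan is to use the necessary condition from the discussion before Conjecture \ref{RekhaConj} to write $\lambda$ as a translate of $m\rho$ by a weight of $V(n\rho)$. Then we rewrite $\lambda$ as a translate of $(m-1)\rho$ by a weight of $V((n+1)\rho)$ and apply Conjecture \ref{RekhaConj} with the parameters $(m-1,n+1)$.

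\textbf{Step 1.} Since $V(\lambda)$ occurs in $V(m\rho)\otimes V(n\rho)$, a highest weight vector of weight $\lambda$ lies in the tensor product. Hence $\lambda = m\rho+\beta$ for some weight $\beta$ of $V(n\rho)$. This is the standard fact recalled before Conjecture \ref{RekhaConj}: the highest weight vector, expanded in a weight basis $v_{m\rho}\otimes w_\beta + (\text{terms with lower first factor})$, must have a nonzero component with first factor $v_{m\rho}$, since otherwise applying raising operators cannot annihilate it.

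\textbf{Step 2.} Set $\beta'=\rho+\beta$, so that $\lambda=(m-1)\rho+\beta'$. We must show that $\beta'$ is a weight of $V((n+1)\rho)$. The weight $\rho$ occurs in $V(\rho)$ as the highest weight, and $\beta$ occurs in $V(n\rho)$. The tensor product $V(\rho)\otimes V(n\rho)$ has weight $\rho+\beta$. It also contains the Cartan component $V((n+1)\rho)$, but a weight of the tensor product need not be a weight of that component, so a cleaner argument is needed. For this we use the standard description: for dominant $\mu$, the weights of $V(\mu)$ are exactly the $\gamma\in \mu+Q$ with $w\gamma\le\mu$ for all $w\in W$, i.e.\ the dominant conjugate of $\gamma$ is $\le\mu$. (In affine type this holds for integrable highest weight modules, cf.\ Kac, Prop.~12.5.) We then check this condition for $\gamma=\rho+\beta$ and $\mu=(n+1)\rho$. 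Congruence holds since $\beta\in n\rho+Q$. For any $w\in W$,
$$
w(\rho+\beta)=w\rho+w\beta\le \rho+n\rho=(n+1)\rho,
$$
because $w\rho\le\rho$ and $w\beta\le n\rho$, the latter since $w\beta$ is again a weight of $V(n\rho)$. Hence $\beta'$ is a weight of $V((n+1)\rho)$.

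\textbf{Step 3.} The weight $\lambda$ is dominant, and $m-1\ge n+1$ or $m-1=n$. In the latter case we swap the roles of the two parameters by symmetry of the tensor product. Conjecture \ref{RekhaConj} with $(m-1,n+1)$ then gives $V(\lambda)\subseteq V((m-1)\rho)\otimes V((n+1)\rho)$. If $m-1<n+1$, the conjecture's hypothesis $m\ge n$ is violated, but since $m>n$ forces $m-1\ge n$, the only such case is $m-1=n$. There the tensor product is $V(n\rho)\otimes V((n+1)\rho)\cong V((n+1)\rho)\otimes V(n\rho)$, and we need $\lambda=(n+1)\rho+\beta''$ for some weight $\beta''$ of $V(n\rho)$. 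In that case $\lambda=m\rho+\beta=(n+1)\rho+\beta$ directly, so Conjecture \ref{RekhaConj} applies with $\beta''=\beta$.

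\textbf{Main obstacle.} The only real content is Step 2: the weight-set inclusion $\rho+\mathrm{wt}(V(n\rho))\subseteq \mathrm{wt}(V((n+1)\rho))$, including its validity in the affine setting. The plan is to rely on the $W$-saturation characterization of weight sets of integrable highest weight modules. An alternative is to use (\cite{CKM}, Proposition 9)-style convexity.
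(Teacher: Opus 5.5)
Your proof is correct and follows the same route as the paper. You use the necessary condition to write $\lambda=m\rho+\beta$ with $\beta$ a weight of $V(n\rho)$, rewrite this as $\lambda=(m-1)\rho+(\rho+\beta)$, and apply Conjecture \ref{RekhaConj} with parameters $(m-1,n+1)$. Your two additions fill gaps the paper leaves open rather than change the approach: the $W$-orbit argument proving that $\rho+\beta$ is a weight of $V((n+1)\rho)$, which the paper only asserts, and your handling of the boundary case $m=n+1$, where the two tensor products coincide and which the paper does not address.
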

It would be interesting, given the distinguished role that multiples of $\rho$ play in representation theory and tensor product decomposition problems, to establish not just support containment but embeddings for these pairs assuming Conjecture \ref{RekhaConj}.

\subsection{Outline of the paper}

In Section \ref{TensorCone}, we review the saturated tensor cone for simple Lie algebras and affine Kac-Moody Lie algebras, and complete the proof of Theorem \ref{MainTheorem}(1). In Section \ref{Vir}, we recall the interactions between the saturated tensor cone for affine Kac-Moody Lie algebras and the Virasoro algebra via the Goddard-Kent-Olive construction, and give a proof of Theorem \ref{MainTheorem}(2), along with an outlook for how to adapt these methods for $\mathfrak{g}=\widehat{sl}_{n+1}(\mathbb{C})$ in general. In an appendix, we collect some examples in support of Conjecture \ref{RekhaConj} in small rank and its relationship to tensor product embeddings.

\section{Tensor cones and inequalities} \label{TensorCone}

We fix $\mathfrak{g}$ to be either a simple (finite-dimensional) Lie algebra or an untwisted affine Kac-Moody Lie algebra associated to a finite-dimensional simple Lie algebra. While the proofs of Theorem \ref{MainTheorem}(1) in each case are identical, we will adopt the notational conventions of \cite{BK} and \cite{Res} from the affine Kac-Moody setting. 

\begin{definition}
The saturated tensor cone $\Gamma_2(\mathfrak{g})$ is given by the set 
$$
\Gamma_2(\mathfrak{g}):=\{(\lambda, \mu, \nu)\in (P^+)^3 : \exists N \geq 1 \text{ with } V(N \nu) \subseteq V(N \lambda) \otimes V(N\mu) \}.
$$
\end{definition}
The saturated tensor cone often appears under the name the saturated tensor semigroup or the additive eigencone of $\mathfrak{g}$, and has a natural generalization to include more than two tensor factors. For $\mathfrak{g}=sl_{n+1}(\mathbb{C})$, this recovers the classical Horn cone related to the Hermitian additivie eigenvalue problem. 

When $\mathfrak{g}$ is finite-dimensional, this is finitely-generated polyhedral cone; in the affine setting, the cone is no longer finitely generated and is locally polyhedral. In the latter case, the existence of a saturation factor, as in the following definition, is not immediate but has been determined by Ressayre \cite{Res}.

\begin{definition} \label{satfact}
    A saturation factor $d \geq 1$ of $\Gamma_2(\mathfrak{g})$ is a positive integer such that, for any $(\lambda, \mu, \nu) \in (P^+)^3$ where $\lambda+\mu-\nu \in Q$ and $(\lambda, \mu, \nu) \in \Gamma_2(\mathfrak{g})$, we have $V(d \nu) \subseteq V(d\lambda) \otimes V(d\mu)$.
\end{definition}

For $\mathfrak{g}=sl_{n+1}(\mathbb{C})$, it is known by the saturation theorem of Knutson-Tao \cite{KT} that in that case we can take $d=1$ as a saturation factor. In general, $d >1$, and the determination of minimal saturation factors (and when one can take saturation factor $d=1$) is an open area of research.

In the cases considered here, the saturated tensor cone $\Gamma_2(\mathfrak{g})$ has an explicit description in terms of inequalities. For finite-dimensional $\mathfrak{g}$, the following presentation is due to Belkale-Kumar \cite{BeK}. In the affine Kac-Moody setting, these inequalities were conjectured for all symmetrizable Kac-Moody types by Brown-Kumar \cite{BK} and proven for $\mathfrak{g}=\widehat{sl}_{2}(\mathbb{C})$. The proof for all affine Kac-Moody $\mathfrak{g}$ is due to Ressayre \cite{Res}. 

\begin{theorem} \label{Ineq}
Let $x_i$ be the fundamental coweights of $\mathfrak{g}$ dual to the simple roots, defined by $\alpha_j(x_i)=\delta_{i,j}$. Fix $(\lambda, \mu, \nu) \in (P^+)^3$ with $\lambda+\mu-\nu \in Q$. If $\mathfrak{g}$ is an affine Kac-Moody Lie algebra, assume further that $\lambda(c)>0$ and $\mu(c)>0$, where $c$ is the central element of $\mathfrak{g}$. Then the following are equivalent: 
\begin{enumerate}
    \item $(\lambda, \mu, \nu) \in \Gamma_2(\mathfrak{g})$.
    \item For all maximal parabolic subgroups $P_i$ of $G^{min}$, the minimal Kac-Moody group associated to $G$, and triples $(u,v,w) \in (W^{P_i})^3$ of minimal-length coset representatives such that the cohomology class $\varepsilon^w \in H^\ast(G/P_i, \mathbb{Z})$ appears with coefficient $1$ in the deformed product $\varepsilon^u \odot_0 \varepsilon^v$, the following inequality is satisfied:
    $$
    \nu(w x_i) \leq \lambda(u x_i) + \mu(v x_i).
    $$
\end{enumerate}
\end{theorem}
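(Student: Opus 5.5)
The plan is to follow the geometric invariant theory route of Belkale--Kumar and Ressayre. Replace $V(\nu)$ by its dual (via the lowest weight vector in the Kac--Moody setting) and identify $V(N\nu)\subseteq V(N\lambda)\otimes V(N\mu)$ with the existence of a nonzero $G$-invariant section of $\mathcal{L}^N$. Here $\mathcal{L}=\mathcal{L}(\lambda)\boxtimes\mathcal{L}(\mu)\boxtimes\mathcal{L}(\nu^\ast)$ on $X=(G/B)^3$, using the Borel--Weil theorem in finite type and Kumar's Borel--Weil--Bott theorem for the flag ind-variety in affine type. The positivity $\lambda(c),\mu(c)>0$ guarantees the relevant line bundles are ample on every finite-dimensional Schubert approximation, so GIT makes sense there. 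With this dictionary, $\Gamma_2(\mathfrak{g})$ is the cone of $G$-linearized line bundles on $X$ admitting nonzero invariant sections after some power, i.e.\ with nonempty semistable locus.

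For the implication $(1)\Rightarrow(2)$, I would use the Hilbert--Mumford criterion with the one-parameter subgroup determined by the coweight $x_i$. The fixed locus of $x_i$ in $(G/P_i)^3$ contains the triple of $T$-fixed points $(u^{-1}P_i, v^{-1}P_i, w^{-1}P_i)$, suitably normalized. The hypothesis that $\varepsilon^w$ appears with coefficient $1$ in $\varepsilon^u\odot_0\varepsilon^v$ means the corresponding shifted Schubert cells meet transversally at a single point, and moreover Levi-movably. From this one produces a generic point of $X$ whose $x_i$-limit lies in that fixed component. The weight of $x_i$ on the fiber of $\mathcal{L}$ at the limit point is, up to sign, $\lambda(ux_i)+\mu(vx_i)-\nu(wx_i)$. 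Semistability of a generic point forces this weight to have the correct sign, which is exactly $\nu(wx_i)\le\lambda(ux_i)+\mu(vx_i)$. This direction is essentially formal once the transversality and Levi-movability are in hand.

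For $(2)\Rightarrow(1)$, I would use Ressayre's theory of well-covering pairs. First show that every codimension-one face of the GIT cone meeting the interior of the dominant chamber comes from a pair $(C,\tau)$, with $\tau$ a dominant one-parameter subgroup and $C$ an irreducible component of $X^\tau$, such that the natural map $G\times_{P(\tau)}C^+\to X$ is birational, i.e.\ well-covering. Second, translate well-covering for $\tau=x_i$ into the cohomological condition in Theorem \ref{Ineq}. Birationality amounts to the Schubert intersection number being $1$. The additional requirement that the differential be an isomorphism along $C$ is precisely Levi-movability, which is the vanishing defining the deformed product $\odot_0$. Since a polyhedral (or locally polyhedral) cone is cut out by its regular facets together with the dominance chamber, the inequalities in (2) then suffice.

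I expect the main obstacle to be the affine case of the sufficiency direction. There $X$ is an ind-variety, $\Gamma_2$ is only locally polyhedral, and one must show that the semistable cone is locally finite near any point with $\lambda(c),\mu(c)>0$. One must also show that each facet is detected on some finite-dimensional Schubert approximation stable enough for the well-covering argument. My first attempt would be to restrict to a large finite-dimensional $G$-stable piece, for instance $G/B\times\overline{BwB}/B\times\overline{BwB}/B$, which is a variety acted on by a finite-dimensional group, and run the finite-type argument there. I would control the level via the positivity assumption so that only finitely many $x_i$-type inequalities are relevant in a neighborhood.
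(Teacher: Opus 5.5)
The paper does not prove this theorem; it cites Belkale--Kumar for finite type and Brown--Kumar and Ressayre for the affine case. Your outline follows the same geometric-invariant-theory strategy, and for finite-dimensional $\mathfrak g$ it is an acceptable sketch. Your sufficiency step is Ressayre's well-covering-pair route rather than Belkale--Kumar's argument via Kempf's optimal destabilizing one-parameter subgroup. You should add the dimension count showing that a regular facet forces $P(\tau)$ to be maximal, i.e.\ $\tau=x_i$: the central torus of $G^\tau$ imposes $\mathrm{rk}\,Z(G^\tau)^\circ$ independent linear conditions on the face.

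The genuine gap is the affine case, and it affects both directions, because the dictionary you start from does not exist there.
There is no $w_0$, so there is no dominant $\nu^\ast$, and the restricted dual of $V(\nu)$ is a lowest-weight module of level $-\nu(c)$. On the thin flag ind-variety, $H^0(G/B,\mathcal L(\lambda))\cong V(\lambda)^\ast$ for dominant $\lambda$. So a $G$-invariant section of a box product of three such bundles on $(G/B)^3$ is an invariant functional on a triple tensor product on which $c$ acts by a positive scalar, hence zero.

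The $\nu$-factor therefore has to be realized on a different kind of space from the $\lambda$- and $\mu$-factors. This is exactly why (2) is the asymmetric condition that $\varepsilon^w$ occur in $\varepsilon^u\odot_0\varepsilon^v$, rather than a triple product equal to the point class: it pairs a finite-dimensional Schubert variety $X_w$ against finite-codimensional opposite Schubert varieties in the thick flag variety. On such spaces, with $G$ infinite-dimensional and not reductive, neither the Hilbert--Mumford criterion nor Ressayre's well-covering theorem is available off the shelf.

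Your proposed reduction does not repair this. $G/B\times\overline{BwB}/B\times\overline{BwB}/B$ is neither finite-dimensional nor $G$-stable. The groups that do act on it ($B$, or a Levi of finite type) are either non-reductive or do not see full $\mathfrak g$-invariance.

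Finally, $\lambda(c),\mu(c)>0$ does not give ampleness: $\mathcal L(\Lambda_0)$ has positive level but degree $0$ on $P_1/B\cong\mathbb P^1$. The hypothesis is also essential rather than technical. The triple $(0,\mu,\mu-\delta)$ satisfies every inequality in (2), because $\delta(wx_i)=\delta(x_i)>0$ and $\mu(vx_i)\ge\mu(wx_i)$ whenever $w\succeq v$ (as in Lemma \ref{ConeTranslate}). Yet it is not in $\Gamma_2(\mathfrak g)$, since $V(0)\otimes V(N\mu)=V(N\mu)$. So positivity of the level must enter the sufficiency proof in a substantive way to control the $\delta$-direction, and your sketch offers no mechanism for this beyond a hoped-for local finiteness. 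As it stands, the affine half of the theorem is not proved.
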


By Theorem \ref{Ineq}, to prove Theorem \ref{MainTheorem}(1) it suffices to show that for any $m \geq n \geq 0$ and $\lambda \in P^+$ of the prescribed form that $(m\rho, n\rho, \lambda) \in \Gamma_2(\mathfrak{g})$ or equivalently that $(m\rho, n\rho, \lambda)$ satisfies all of the inequalities determining $\Gamma_2(\mathfrak{g})$. Note that when $\mathfrak{g}$ is of affine Kac-Moody type and we are in the nontrivial case of $m \geq n >0$, the condition on the central charge $m\rho(c)>0$ and $n\rho(c)>0$ is satisfied. 

By definition, for any dominant integral weight $\lambda \in P^+$ we have that $V(0) \otimes V(\lambda) \cong V(\lambda)$, so $(0, \lambda, \lambda) \in \Gamma_2(\mathfrak{g})$. This is also directly observable from the inequalities, as in the following lemma. 

\begin{lemma} \label{ConeTranslate}
For a maximal parabolic subgroup $P_i$, let $(u,v,w)\in (W^{P_{i}})^3$ be such that $\varepsilon^w$ appears with coefficient $1$ in the deformed product $\varepsilon^u \odot_0 \varepsilon^v$. Then both  $\lambda(v x_i) - \lambda(w x_i) \geq 0$ and $\lambda(u x_i)-\lambda(w x_i) \geq 0$. 
\end{lemma}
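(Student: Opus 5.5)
The plan is to extract both inequalities from the fact, noted just before the lemma, that $(0,\lambda,\lambda)$ and $(\lambda,0,\lambda)$ lie in $\Gamma_2(\mathfrak{g})$. Since this only holds for dominant $\lambda$, we assume $\lambda\in P^+$ throughout. Because Theorem \ref{Ineq} requires positive level in the affine case, we will not apply it to these degenerate triples. Instead we work with the geometric/Schubert-calculus content behind the inequalities: the coefficient-$1$ condition in $\varepsilon^u\odot_0\varepsilon^v$ gives combinatorial information about $(u,v,w)$ that already forces the inequalities.

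\emph{Step 1 (Bruhat comparison).} First we show that $\varepsilon^w$ appearing with nonzero coefficient in $\varepsilon^u\odot_0\varepsilon^v$ implies $u\leq w$ and $v\leq w$ in the Bruhat order on $W^{P_i}$. The deformed product $\odot_0$ is a specialization of the ordinary cup product: its structure constants are those of the cup product times a nonnegative power of the deformation parameters, set to $0$ or $1$. Hence a nonzero coefficient in $\odot_0$ gives a nonzero coefficient in $\varepsilon^u\cdot\varepsilon^v$. In the cohomology of the (Kac–Moody) flag variety $G/P_i$, a nonzero structure constant $c_{u,v}^w$ forces $\varepsilon^w$ to lie in the support of $\varepsilon^u$ times a positive-degree class. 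This uses Kumar's positivity and the fact that the product is supported on the Schubert varieties $\overline{X_w}\supseteq X_u\cup X_v$. The conclusion is $u\leq w$ and $v\leq w$. In the finite-dimensional case this is the standard fact that $c_{u,v}^w\neq 0$ implies $u,v\leq w$. In the affine case we cite the same statement for Kumar's Schubert basis; because we are working in $H^*$, Schubert cells are indexed by codimension and only finitely many $w$ occur, so no finiteness issue arises.

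\emph{Step 2 (monotonicity of $\lambda(\,\cdot\, x_i)$).} Next we show that for dominant $\lambda$ and $u\leq w$ in $W^{P_i}$ we have $\lambda(u x_i)\geq\lambda(w x_i)$. It suffices to treat a covering $w=s_\beta u$ with $\ell(w)>\ell(u)$ and $\beta$ a positive real root. Then
\[
w x_i = u x_i - \beta(u x_i)\,\beta^\vee ,
\]
and $\beta(ux_i)=(u^{-1}\beta)(x_i)$. The condition $\ell(s_\beta u)>\ell(u)$ gives $u^{-1}\beta>0$, and since $x_i$ is dominant, $(u^{-1}\beta)(x_i)\geq 0$. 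Therefore
\[
\lambda(ux_i)-\lambda(wx_i)=(u^{-1}\beta)(x_i)\,\lambda(\beta^\vee)\geq 0,
\]
where the last factor is nonnegative because $\lambda$ is dominant and $\beta^\vee$ is a positive coroot. Chaining coverings gives the claim. Combining this with Step 1 yields both inequalities of the lemma.

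The main obstacle is Step 1 in the affine Kac–Moody setting: justifying that a nonzero deformed structure constant implies the Bruhat relations. The fallback, if this is hard to cite, is the dimension and transversality argument underlying the Belkale–Kumar/Ressayre framework. A nonzero coefficient means the shifted Schubert cells $gX_u\cap hX_v$ meet $X^w$-type cells generically, so $\overline{X_w}$ must contain translates of both $X_u$ and $X_v$. This again forces $u,v\leq w$.
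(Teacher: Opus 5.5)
Your proposal is correct and follows the paper's proof. You pass from $\odot_0$ to the ordinary cup product, use that $c_{u,v}^w\neq 0$ forces $u,v\le w$ in the Bruhat order (Kumar, Corollary 11.3.17), and conclude from $u\le w$ that $ux_i-wx_i\in Q^\vee_+$; the paper quotes this last step as a standard fact, while you verify it directly along a reflection chain $w=s_\beta u$. In Step 1 your justification via positivity and ``support on Schubert varieties'' is muddled and not needed: $\varepsilon^u$ restricts to zero on $X_w$ unless $u\le w$, so the citation alone suffices and the transversality fallback is unnecessary.
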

\begin{proof}
    Since $\varepsilon^w$ appears in
$
\varepsilon^u \odot_0 \varepsilon^v,
$
it also appears in the ordinary cup product
\[
\varepsilon^u \cdot \varepsilon^v
\]
in $H^*(G/P_i)$. Recall that the Schubert basis satisfies
\[
\varepsilon^u \cdot \varepsilon^v
 = \sum_{y\in W^{P_i}} c_{u,v}^y \,\varepsilon^y,
\]
where $c_{u,v}^y\neq 0$ only if
\[
y \succeq u,
\qquad
y \succeq v
\]
in the Bruhat order on $W^{P_i}$ (cf. \cite{Kumar} Corollary 11.3.17). In particular, since the coefficient of
$\varepsilon^w$ is nonzero, we have
\[
w \succeq u, v.
\]

Now recall the standard fact that if $
y \succeq z
$
in Bruhat order, then for every dominant coweight $\eta$,
\[
z\eta-y\eta \in Q_+^\vee.
\]
Applying this with $y=w$, $z=u$, and $\eta=x_i$, we obtain
\[
ux_i-wx_i \in Q_+^\vee.
\]
Since $\lambda$ is dominant integral, it is nonnegative on positive
coroots. Therefore,
\[
\lambda(ux_i-wx_i)\ge 0,
\]
and hence
\[
\lambda(ux_i)-\lambda(wx_i)\ge 0.
\]
The same follows for $v$ by symmetry in $u$ and $v$.
\end{proof}

We will make crucial use of the following proposition, which is taken from (\cite{BK}, Proposition 7.1(b)).

\begin{proposition} \label{BaseIneq}
    Let $(u,v,w)\in (W^{P_i})^3$ such that $\varepsilon^w$ appears with coefficient $1$ in $\varepsilon^u \odot_0 \varepsilon^v$. Then 
    $$
    (u^{-1}\rho +v^{-1}\rho -w^{-1}\rho-\rho)(x_i) \geq 0.
    $$
    (In fact, in this case the stronger statement that this is precisely equal to $0$ holds, but we will not need this stronger result.)
\end{proposition}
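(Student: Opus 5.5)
The plan is to show that the quantity in question is, up to sign, exactly the exponent of the deformation parameter in the Belkale--Kumar deformed product. Nonvanishing of the coefficient at $t=0$ will then force that exponent to vanish, which is stronger than the stated inequality.

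First I would recall the definition of $\odot_0$ (from \cite{BeK} in finite type, and as adapted in \cite{BK} for Kac--Moody $G^{min}$). For the maximal parabolic $P_i$ and $u,v\in W^{P_i}$, write the cup product as $\varepsilon^u\cdot\varepsilon^v=\sum_w c^w_{u,v}\varepsilon^w$. One sets
\[
\varepsilon^u\odot_t\varepsilon^v=\sum_{w} t^{(\chi_w-\chi_u-\chi_v)(x_i)}\,c^w_{u,v}\,\varepsilon^w,\qquad \chi_y:=\rho-2\rho^{L_i}+y^{-1}\rho ,
\]
where $\rho^{L_i}$ is the half-sum of positive roots of the Levi $L_i$. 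The product $\odot_0$ is the specialization at $t=0$. Since every root of $L_i$ is a combination of $\alpha_j$ with $j\ne i$, we have $\rho^{L_i}(x_i)=0$. So the exponent reduces to $(w^{-1}\rho-u^{-1}\rho-v^{-1}\rho-\rho)(x_i)$ in the normalization where $\rho(x_i)$ enters with sign $-$. After matching the convention of the paper (indexing $\varepsilon^w$ by codimension and using $x_i$ as the dominant coweight), this exponent is precisely $-(u^{-1}\rho+v^{-1}\rho-w^{-1}\rho-\rho)(x_i)$ or its negative.

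The key input is the theorem of Belkale--Kumar that $\odot_t$ is well defined over $\mathbb{Z}[t]$. Concretely, whenever $c^w_{u,v}\neq 0$, the exponent is $\ge 0$, and $\odot_0$ keeps exactly the terms of exponent $0$. I would recall the proof mechanism to pin down the sign. If $c^w_{u,v}\ne0$, a generic translate of the three Schubert cells meets transversally. Consider the tangent map
\[
T_e(G/P_i)\to \bigoplus_{y\in\{u,v,w^\ast\}} T_e(G/P_i)/T_e(y^{-1}\Lambda_y),
\]
and compare the weights of the one-parameter subgroup $x_i$ (central in $L_i$) on the top exterior powers of source and target. This shows the exponent equals a weight that is nonnegative under generic transversality, and is zero exactly in the Levi-movable case. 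Here $\varepsilon^w$ appears in $\odot_0$ with nonzero coefficient exactly when $(u,v,w)$ is Levi-movable, so the exponent is $0$. That gives the equality mentioned parenthetically, and in particular the inequality $\ge0$.

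The main obstacle is purely bookkeeping of conventions. The paper's $\varepsilon^w$ versus the dual class $\varepsilon^{w^\ast}$, and the sign in front of $\rho(x_i)$, differ between \cite{BeK}, \cite{BK} and \cite{Res}. I would first verify the sign on a trivial triple, such as $u=e$ and $v=w$. There the claimed expression becomes $(\rho+w^{-1}\rho-w^{-1}\rho-\rho)(x_i)=0$, consistent with the trivial product $\varepsilon^e\odot_0\varepsilon^w=\varepsilon^w$. Since only the weak inequality is needed, and the Levi-movable case gives exact vanishing, either sign normalization yields the claim once equality is established. So I would aim to prove the equality directly rather than the inequality.
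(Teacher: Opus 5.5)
Your strategy is the right one. It is also exactly what the paper imports from Brown--Kumar instead of proving: the quantity $Q:=(u^{-1}\rho+v^{-1}\rho-w^{-1}\rho-\rho)(x_i)$ is the exponent of the deformation parameter, and $\odot_0$ keeps $\varepsilon^w$ only when that exponent vanishes, so in fact $Q=0$.

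The problem is the one step that carries all the content: identifying that exponent in the paper's indexing. With $\chi_y=\rho-2\rho^{L_i}+y^{-1}\rho$ and exponent $(\chi_w-\chi_u-\chi_v)(x_i)$, you get $(w^{-1}\rho-u^{-1}\rho-v^{-1}\rho-\rho)(x_i)$. In the paper's codimension indexing, where $\varepsilon^e=1$, this equals $-2\rho(x_i)\neq 0$ on the trivial triple $(e,w,w)$. Your trivial-triple check was run on the target expression, not on the exponent you wrote down.

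That formula is the Belkale--Kumar exponent for classes indexed by dimension, i.e.\ dual to $\overline{BwP_i/P_i}$. Passing to codimension indexing via $w\mapsto w_0ww_{0,L_i}$ negates $(w^{-1}\rho)(x_i)$, since $w_{0,L_i}x_i=x_i$ and $w_0\rho=-\rho$. It does not touch the constant $-\rho(x_i)$, and the result is exactly $Q$. So the discrepancy is not a global sign, and your fallback ``either sign normalization yields the claim'' is false as stated. The global negative of your displayed exponent is $Q+2\rho(x_i)$, and its vanishing would force $Q=-2\rho(x_i)<0$, the opposite of the proposition.

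The clean repair is to use the codimension normalization directly. This also covers the affine case, where there is neither $w_0$ nor a dimension-indexed Schubert basis. Set $\chi_w:=\rho-w^{-1}\rho=\sum_{\beta\in R^+\cap w^{-1}R^-}\beta$, a finite sum of $\ell(w)$ real roots. Then $(\chi_w-\chi_u-\chi_v)(x_i)=Q$ on the nose. The deformed product $\odot_0$ keeps $c^w_{u,v}$ precisely when this vanishes, so the proposition, with equality, is immediate. Your tangent-space and Levi-movability discussion is needed only for the nonnegativity $Q\geq 0$ whenever $c^w_{u,v}\neq 0$, which the statement does not use.

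A sanity check on the sign: in type $C_2$ with $P_1$, so $G/P_1\cong\mathbb{P}^3$, one has $\varepsilon^{s_1}\cdot\varepsilon^{s_1}=\varepsilon^{s_2s_1}$ with $Q=1+1-(-1)-2=1>0$. This term is killed in $\odot_0$, consistent with $Q$, and not $-Q$, being the exponent.
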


With this in hand, we are now ready to prove Theorem \ref{MainTheorem}(1).

\begin{proof}[Proof of Theorem \ref{MainTheorem}(1)]
Let $m \geq n \geq 0$ and let $\lambda \in P^+$ be a dominant integral weight such that $\lambda=m\rho+\beta$ for some weight $\beta$ of the representation $V(n\rho)$. Fix a maximal parabolic subgroup $P_i$, and a triple $(u,v,w) \in (W^{P_i})^3$ such that $\varepsilon^w$ appears with coefficient $1$ in the deformed product $\varepsilon^u \odot_0 \varepsilon^v$. We compute directly that 
$$
\begin{aligned}
m\rho(u x_i)+&n\rho(vx_i)-\lambda(w x_i) = m\rho(ux_i)+n\rho(vx_i)-m\rho(w x_i) - \beta(w x_i) \\
&=n(u^{-1} \rho(x_i)) +n(v^{-1}\rho(x_i))-n(w^{-1}\rho(x_i)) -w^{-1}\beta(x_i)  + (m-n) \left(\rho(u x_i) - \rho(wx_i) \right)
\end{aligned}
$$
Now, as $\beta$ is a weight of $V(n\rho)$, so is $w^{-1}\beta$, so that $$n\rho-w^{-1}\beta \in Q^+.$$ In particular, $(n\rho-w^{-1}\beta)(x_i) \geq 0$, so that $w^{-1}\beta(x_i) \leq n\rho(x_i)$. Thus
$$
m\rho(ux_i)+n\rho(vx_i)-\lambda(wx_i) \geq n \left(u^{-1}\rho+v^{-1}\rho-w^{-1}\rho-\rho\right)(x_i) + (m-n)(\rho(ux_i)-\rho(wx_i)).
$$
By Lemma \ref{ConeTranslate}, Proposition \ref{BaseIneq}, and that $m \geq n \geq 0$, both terms on the right hand side of this inequality are nonnegative, so that 
$$
m\rho(ux_i)+n\rho(vx_i)-\lambda(wx_i) \geq 0,
$$
hence $(m\rho, n\rho, \lambda) \in \Gamma_2(\mathfrak{g})$ as desired. 
\end{proof}

\begin{remark}
    For any symmetrizable Kac-Moody Lie algebra $\mathfrak{g}$, an analogous set of inequalities are necessary for points $(\lambda, \mu, \nu) \in \Gamma_2(\mathfrak{g})$ (\cite{BK} Theorem 1.1). That these inequalities are also sufficient is still an open question. Assuming this, the above proof would apply mutatis mutandis to get the analogue of Theorem \ref{MainTheorem}(1) in the more general symmetrizable setting.
\end{remark}

While the above result only demonstrates Conjecture \ref{RekhaConj} up to saturation, many direct examples can be computed to support the conjecture without saturation factor. In the $m=n=1$ case of the original Kostant conjecture, this has been done for all exceptional types, as reported by \cite{CKM}. We give some select examples of $m >n >0$ to demonstrate the feasibility of the conjecture in the Appendix.

\section{Virasoro action and affine Lie algebras} \label{Vir}

\subsection{Consequences of a Virasoro action on tensor products}

In this section, we focus on the case when $\mathfrak{g}$ is an untwisted affine Kac-Moody Lie algebra. In this setting, the interaction between the action of $\mathfrak{g}$ and the action of the Virasoro algebra on $\mathfrak{g}$-representations provides fruitful information on branching problems, particularly the decomposition of tensor products. This approach has been utilized many times in the literature; cf. \cite{KW, KRR, BK, Res, JK2, JK} for many such examples and references. 

We collect here the necessary technical results needed for applying the representation theory of the Virasoro algebra to tensor products of $\mathfrak{g}$-representations, following (\cite{KRR} Lecture 10). The action arises from the Goddard-Kent-Olive (or GKO) construction; this is a coset variation of the usual Sugawara construction of the Virasoro algebra in the completion of the enveloping algebra $U(\mathfrak{g})$. We denote by $Vir_{GKO}=\langle L_k^{GKO} (k \in \mathbb{Z}), C \rangle$ the Virasoro algebra arising from this construction.

\begin{proposition} \label{GKO}
    Let $\mathfrak{g}$ be an untwisted affine Kac-Moody Lie algebra and $\mathring{\mathfrak{g}}$ the underlying finite-dimensional simple Lie algebra. Let $\lambda, \mu \in P^+$ be two dominant integral weights of $\mathfrak{g}$ with positive levels $\lambda(c)=\ell, \mu(c)=m$. Then
    \begin{enumerate}
        \item $V(\lambda) \otimes V(\mu)$ is a unitarizable Virasoro representation with nonnegative central charge 
        $$
        (\dim \mathring{\mathfrak{g}}) \left(\frac{\ell}{\ell+h^\vee} + \frac{m}{m+h^\vee}-\frac{\ell+m}{\ell+m+h^\vee} \right)$$
        where $h^\vee$ is the dual Coxeter number of $\mathfrak{g}$.

        \item $L_0^{GKO}$ acts on $V(\lambda) \otimes V(\mu)$ by 
        $$
        \frac{1}{2} \left( \frac{(\lambda | \lambda+2\rho)}{\ell+h^\vee}+ \frac{(\mu | \mu+2\rho)}{m+h^\vee} - \frac{\Omega}{\ell+m+h^\vee} \right),
        $$
        where $(\cdot | \cdot)$ is the normalized invariant form on $\mathfrak{h}^\ast$ and $\Omega$ is the Casimir operator.

        \item For all $k$, $[L_k^{GKO}, \mathfrak{g}']=0$; that is, $L_k^{GKO}$ are intertwining operators for the $\mathfrak{g}'=[\mathfrak{g},\mathfrak{g}]$ action on $V(\lambda) \otimes V(\mu)$. 

    \end{enumerate}
\end{proposition}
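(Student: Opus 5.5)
This is the standard Goddard--Kent--Olive coset construction, and I would prove it along the lines of (\cite{KRR}, Lecture 10). Let $\{u_a\}$ and $\{u^a\}$ be dual bases of $\mathring{\mathfrak{g}}$ with respect to the normalized form. For a $\mathfrak{g}$-module $V$ of level $k>0$ with $k\neq -h^\vee$, the Sugawara operators are
\[
L_n^{V}=\frac{1}{2(k+h^\vee)}\sum_{j\in\mathbb{Z}}\sum_a :u_a(-j)u^a(j+n):.
\]
On integrable highest weight modules these satisfy the Virasoro relations with central charge $k\dim\mathring{\mathfrak{g}}/(k+h^\vee)$. They also satisfy $[L_n^V,x(j)]=-j\,x(j+n)$ for $x\in\mathring{\mathfrak{g}}$. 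On $V(\lambda)$ the operator $L_0$ acts as $\frac{(\lambda|\lambda+2\rho)}{2(\ell+h^\vee)}-d$.

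On the tensor product $V(\lambda)\otimes V(\mu)$, which has level $\ell+m$, three families of operators act:
\begin{itemize}
\item $L_n^{(1)}=L_n^{V(\lambda)}\otimes 1$;
\item $L_n^{(2)}=1\otimes L_n^{V(\mu)}$;
\item the diagonal Sugawara operators $L_n^{\Delta}$, built from the diagonal action $x(j)\otimes 1+1\otimes x(j)$ at level $\ell+m$.
\end{itemize}
Each family is well defined because every vector is annihilated by $x(j)$ for $j\gg 0$. I would set
\[
L_k^{GKO}=L_k^{(1)}+L_k^{(2)}-L_k^{\Delta}.
\]

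The key steps, in order, are as follows.

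First I would prove (3). The sum $L^{(1)}+L^{(2)}$ satisfies the same commutation relation $[\cdot,x(j)]=-j\,x(j+n)$ with the diagonal action, and so does $L^{\Delta}$. Their difference therefore commutes with every $x(j)$ and with $c$, hence with $\mathfrak{g}'$.

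Next I would get the Virasoro relations and the central charge. The operator $L^{\Delta}_n$ is a (completed) polynomial in the diagonal $x(j)$, so (3) gives $[L^{GKO}_k,L^{\Delta}_n]=0$. Writing $L^{(1)}+L^{(2)}=L^{GKO}+L^{\Delta}$ and using that the two summands commute, the Virasoro bracket splits. Hence $L^{GKO}$ satisfies Virasoro relations with central charge $c_1+c_2-c_\Delta$, which is exactly the displayed formula. This central charge is nonnegative by concavity of $t\mapsto t/(t+h^\vee)$ with value $0$ at $t=0$: subadditivity gives $\frac{\ell}{\ell+h^\vee}+\frac{m}{m+h^\vee}\ge\frac{\ell+m}{\ell+m+h^\vee}$.

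Then I would prove (2). The derivation $d$ acts diagonally, so the $-d$ contributions cancel as $-d_1-d_2+(d_1+d_2)=0$. We are left with the two Casimir eigenvalues minus $\Omega/(2(\ell+m+h^\vee))$. Here $\Omega$ is the Casimir of the diagonal action, written so that $L_0^{\Delta}=\frac{\Omega}{2(\ell+m+h^\vee)}-d$.

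Unitarity in (1) is the main obstacle. I would equip $V(\lambda)$ and $V(\mu)$ with their contravariant positive definite Hermitian forms, so that $x(j)^\dagger=\bar x^{\ast}(-j)$ for the compact anti-involution, and take the tensor product form. Each Sugawara operator is then formally self-adjoint in the sense $L_n^\dagger=L_{-n}$, and hence so is $L^{GKO}_n$. The form is positive definite on the tensor product, so this gives a unitary Virasoro representation. The delicate points are these:
\begin{itemize}
\item checking $L_n^\dagger=L_{-n}$ carefully with normal ordering, which follows from invariance of the form under the dual basis swap;
\item the convergence of the infinite sums on each vector, where I would rely on the grading by $d$, since each weight space is finite dimensional and $L_n$ shifts degree by $-n$.
\end{itemize}
Since these are standard, I would cite \cite{KRR} for the operator identities and reduce the proof to the bookkeeping above.
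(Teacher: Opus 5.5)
Your proposal is correct and is the standard Goddard--Kent--Olive coset argument of \cite{KRR}, Lecture 10. That is all the paper invokes here: it states the proposition without proof and cites that source. You use the same ingredients, namely the splitting $L^{(1)}+L^{(2)}=L^{GKO}+L^{\Delta}$ into commuting Virasoro families, the cancellation of the $d$-terms to obtain (2), and self-adjointness with respect to the tensor product of the contravariant positive definite forms for unitarity.
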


We now recall two notions of ``$\delta$-strings" in the representation theory of $\mathfrak{g}$ and their maximal elements. First, if $\nu$ is a weight appearing in the representation $V(\lambda)$, then there is some $N \in \mathbb{Z}$ such that the set 
$$
\{\nu+k \delta: k \leq N\}
$$
all appear as weights in $V(\lambda)$. We call the weight $\nu+N\delta$ the \textbf{$\delta$-maximal weight} along the string through $\nu$. Similarly, we say that $V(\nu+N\delta) \subseteq V(\lambda) \otimes V(\mu)$ is a \textbf{$\delta$-maximal component} if $V(\nu+k\delta) \not \subset V(\lambda) \otimes V(\mu)$ for any $k>N$. For a fixed $\delta$-maximal component (appropriately relabeling the highest weight) $V(\nu) \subseteq V(\lambda) \otimes V(\mu)$, the implications of Proposition \ref{GKO} give the following structure theory for $\delta$-strings in $\Gamma_2(\mathfrak{g})$. 

\begin{proposition} \label{deltaMax}
Retaining notation as in Proposition \ref{GKO}, let $V(\nu)$ be a $\delta$-maximal component of $V(\lambda) \otimes V(\mu)$. Then 
\begin{enumerate}
    \item If $\frac{1}{2} \left( \frac{(\lambda | \lambda+2\rho)}{\ell+h^\vee}+ \frac{(\mu | \mu+2\rho)}{m+h^\vee} - \frac{(\nu | \nu+2\rho)}{\ell+m+h^\vee} \right)>0$, then $V(\nu-k\delta) \subseteq V(\lambda) \otimes V(\mu)$ for all $k \geq 0$. 
    \item Otherwise, if $\frac{1}{2} \left( \frac{(\lambda | \lambda+2\rho)}{\ell+h^\vee}+ \frac{(\mu | \mu+2\rho)}{m+h^\vee} - \frac{(\nu | \nu+2\rho)}{\ell+m+h^\vee} \right)=0$, then $V(\nu-k\delta) \subseteq V(\lambda) \otimes V(\mu)$ for $k=0$ and $k \geq 2$.
\end{enumerate}
\end{proposition}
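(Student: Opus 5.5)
Fix a $\delta$-maximal component $V(\nu)\subseteq V(\lambda)\otimes V(\mu)$ and let $U\subseteq V(\lambda)\otimes V(\mu)$ be the space of $\mathfrak{g}'$-highest weight vectors of weight congruent to $\nu$ modulo $\mathbb{C}\delta$. The plan is to build the components $V(\nu-k\delta)$ from a Virasoro module living on $U$.

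By Proposition \ref{GKO}(3), each $L_k^{GKO}$ commutes with $\mathfrak{g}'$, so it preserves $U$. The operator $-d$ (equivalently, the $\delta$-grading) shifts under $L_k^{GKO}$: comparing $[d,\cdot]$ with the Sugawara operators shows $L_{k}^{GKO}$ changes the weight by $-k\delta$. Hence $U=\bigoplus_{j\ge 0}U_{\nu-j\delta}$, with top piece $U_\nu\neq 0$ by $\delta$-maximality, is a graded Virasoro module. On a highest weight vector $v_\nu\in U_\nu$ the Casimir acts by $(\nu|\nu+2\rho)$. Therefore Proposition \ref{GKO}(2) says $L_0^{GKO}$ acts on $v_\nu$ by the scalar
\[
h=\tfrac12\Bigl(\tfrac{(\lambda|\lambda+2\rho)}{\ell+h^\vee}+\tfrac{(\mu|\mu+2\rho)}{m+h^\vee}-\tfrac{(\nu|\nu+2\rho)}{\ell+m+h^\vee}\Bigr).
\]
The vectors $L_k^{GKO}v_\nu$ with $k>0$ would land in weight $\nu+k\delta$. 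That weight space has no $\mathfrak{g}'$-highest weight vectors by $\delta$-maximality, so these vectors vanish and $v_\nu$ is a Virasoro highest weight vector of weight $h$ and central charge $c\ge 0$.

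Let $M$ be the Virasoro submodule generated by $v_\nu$. It is unitarizable by Proposition \ref{GKO}(1), being a submodule of a unitarizable module, so it is the irreducible unitary module $L(c,h)$; in particular $h\ge 0$. Each nonzero vector of $M$ in degree $k$, i.e. of weight $\nu-k\delta$, is a $\mathfrak{g}'$-highest weight vector. Being in a unitary (hence completely reducible) integrable setting, it generates a copy of $V(\nu-k\delta)$. So the statement reduces to showing that $L(c,h)$ has nonzero graded pieces in the claimed degrees.

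\textbf{Case $h>0$.} For every $k\ge1$ we have $\|L_{-k}v_\nu\|^2=\langle L_kL_{-k}v_\nu,v_\nu\rangle=2kh+\tfrac{c}{12}(k^3-k)>0$, using $c\ge 0$. Hence $L_{-k}v_\nu\ne0$ for all $k\ge1$, which gives part (1).

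\textbf{Case $h=0$.} Here $\|L_{-1}v_\nu\|^2=2h=0$, so $L_{-1}v_\nu=0$. For $k\ge2$ the same formula gives $\|L_{-k}v_\nu\|^2=\tfrac{c}{12}(k^3-k)$, which is positive provided $c>0$. So the main obstacle is to check $c>0$. This reduces to the strict inequality
\[
\frac{\ell}{\ell+h^\vee}+\frac{m}{m+h^\vee}>\frac{\ell+m}{\ell+m+h^\vee}.
\]
It follows from the strict concavity of $x\mapsto x/(x+h^\vee)$ together with its value $0$ at $0$, which makes the function strictly subadditive for $\ell,m>0$. This gives part (2).

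The delicate points to verify carefully are two. First, the sign convention that $L_{-k}^{GKO}$ lowers by exactly $k\delta$. Second, the claim that $\mathfrak{g}'$-highest weight vectors of weight $\nu-k\delta$ inside a direct sum of integrable highest weight modules produce genuine $\mathfrak{g}$-submodules $V(\nu-k\delta)$. The latter holds by complete reducibility, once we note that $d$ acts diagonally.
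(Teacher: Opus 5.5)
Your argument is correct and follows the same route as the paper, which gives no proof beyond invoking the GKO unitarity of Proposition \ref{GKO} and \cite{JK2}, Section 3: $\delta$-maximality makes $v_\nu$ a Virasoro highest weight vector, and the norms $\|L_{-k}^{GKO}v_\nu\|^2=2kh+\tfrac{c}{12}(k^3-k)$, together with $c>0$, give both parts. One small slip to fix: your opening claim that $L_k^{GKO}$ shifts weights by $-k\delta$ has the wrong sign; it shifts by $+k\delta$, which is the convention you actually use from then on.
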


That is, the action of $L_0^{GKO}$ on a $\delta$-maximal component detects if the string is unbroken; these two cases are the only phenomena that can occur along these strings (cf. \cite{JK2} Section 3). Note, however, that this does not determine whether a given $\delta$-maximal component appears in the tensor product nor determines the $\delta$-maximal components themselves. Nevertheless, a result of Kac-Wakimoto \cite{KW} shows that \textit{some} $\delta$-maximal component must appear: 

\begin{lemma} \label{KWLem}
Let $\lambda, \mu, \nu \in P^+$ such that $\lambda+\mu-\nu \in Q$. Then there is some $k \geq 0$ such that $V(\nu-k\delta) \subseteq V(\lambda) \otimes V(\mu)$. 
\end{lemma}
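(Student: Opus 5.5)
The plan is to prove the lemma with the Kac--Wakimoto modular-invariance argument, organised through branching functions. As in Proposition~\ref{GKO}, assume $\lambda(c)=\ell>0$ and $\mu(c)=m>0$. For every $\lambda'\in P^+$ of level $\ell+m$, define the branching function $b^{\lambda\mu}_{\lambda'}(q):=\sum_{k}\mathrm{mult}\big(V(\lambda'-k\delta),V(\lambda)\otimes V(\mu)\big)\,q^{k}$. This is a power series in $q$ up to a fractional power $q^{s}$; we pass to normalized characters $\chi_\lambda=q^{-s_\lambda}\mathrm{ch}$ so that conformal anomalies are absorbed. Complete reducibility in category $\mathcal{O}$ for integrable modules then gives a finite identity
\[
\chi_\lambda\,\chi_\mu=\sum_{\nu'}b^{\lambda\mu}_{\nu'}\,\chi_{\nu'},
\]
where the sum runs over dominant $\nu'$ of level $\ell+m$ taken modulo $\mathbb{C}\delta$. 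The lemma is exactly the claim that $b^{\lambda\mu}_{\nu}\not\equiv 0$ whenever $\lambda+\mu-\nu\in Q$. Note that $\nu$ has level $\ell+m$ automatically, because $\delta$ and the simple roots vanish on $c$.

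The key steps, in order, are as follows.

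\textbf{Step 1.} Recall the Kac--Peterson modular transformation law, applied separately at levels $\ell$, $m$ and $\ell+m$. Under $\tau\mapsto -1/\tau$ (with the appropriate elliptic variables), $\chi_\lambda\mapsto\sum_{\lambda'}S^{(\ell)}_{\lambda\lambda'}\chi_{\lambda'}$, where the matrices $S^{(k)}$ are unitary and symmetric. They also satisfy the positivity $S^{(k)}_{\lambda,k\Lambda_0}>0$ for every $\lambda$.

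\textbf{Step 2.} Invert the decomposition using the linear independence of the $\chi_{\nu'}$ together with the unitarity of $S^{(\ell+m)}$. This yields the transformation rule
\[
b^{\lambda\mu}_{\nu}(-1/\tau)=\sum_{\lambda',\mu',\nu'}S^{(\ell)}_{\lambda\lambda'}S^{(m)}_{\mu\mu'}\overline{S^{(\ell+m)}_{\nu\nu'}}\,b^{\lambda'\mu'}_{\nu'}(\tau).
\]

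\textbf{Step 3.} Take $\tau=it$ with $t\to\infty$, so that $-1/\tau\to i0^{+}$. On the right-hand side, the term $\lambda'=\ell\Lambda_0$, $\mu'=m\Lambda_0$, $\nu'=(\ell+m)\Lambda_0$ behaves like $q^{-c_{GKO}/24}(1+o(1))$. Here $c_{GKO}$ is the central charge from Proposition~\ref{GKO}(1), and it is strictly positive for positive levels. Its coefficient is $S^{(\ell)}_{\lambda,\ell\Lambda_0}S^{(m)}_{\mu,m\Lambda_0}S^{(\ell+m)}_{\nu,(\ell+m)\Lambda_0}>0$.

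\textbf{Step 4.} Show that every other term has strictly smaller exponential growth as $t\to\infty$, or has the same growth with a coefficient that cannot cancel the leading one. This uses Proposition~\ref{GKO}(2) to see that the minimal $L_0^{GKO}$-eigenvalue is attained only in the vacuum-type configuration. It follows that $b^{\lambda\mu}_\nu(-1/\tau)$ is not identically zero, and hence $b^{\lambda\mu}_\nu\neq 0$. The congruence hypothesis $\lambda+\mu-\nu\in Q$ enters here. The vacuum triple contributes only when $\nu$ lies in the correct class of $P/Q$: the outer automorphisms (centre) act on $S$ by characters, and these would otherwise force cancellation among the terms in the same orbit. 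Equivalently, $b^{\lambda\mu}_\nu=0$ trivially when $\nu\notin\lambda+\mu+Q$.

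The main obstacle is Step 4. One must control the terms whose growth rate equals that of the vacuum, namely those in the orbit of $(\ell\Lambda_0,m\Lambda_0,(\ell+m)\Lambda_0)$ under the diagonal action of the group of diagram automorphisms corresponding to $P/Q$. One must also check that their coefficients add up rather than cancel precisely under the congruence condition. The first thing I would try is the identity $S_{\sigma\lambda',\nu}=e^{2\pi i(\sigma\text{-character})(\nu)}S_{\lambda',\nu}$. With this, the orbit sum factors as the vacuum coefficient times $\sum_\sigma e^{2\pi i\langle\cdot,\lambda+\mu-\nu\rangle}$, and this sum equals $|P/Q|\neq 0$ exactly when $\lambda+\mu-\nu\in Q$.
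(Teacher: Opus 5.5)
Your route is the Kac--Wakimoto modular-invariance argument, which is what the paper relies on: it gives no proof of Lemma~\ref{KWLem}, only the citation \cite{KW}. Steps 1--3 and the orbit computation are sound.

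The gap is the claim in Step 4 that the only triples $(\lambda',\mu',\nu')$ attaining the minimal exponent $-c_{GKO}/24$ are those in the centre-orbit of $(\ell\Lambda_0,m\Lambda_0,(\ell+m)\Lambda_0)$.

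\begin{itemize}
\item Unitarity (Proposition~\ref{GKO}(1)) only tells you that $b^{\lambda'\mu'}_{\nu'}$ has leading exponent $h-c_{GKO}/24$ with $h=N+h_{\lambda'}+h_{\mu'}-h_{\nu'}\geq 0$. Here $h_\kappa=\frac{(\kappa|\kappa+2\rho)}{2(\kappa(c)+h^\vee)}$, and $N$ is the least depth at which $V(\nu'-N\delta)$ occurs in $V(\lambda')\otimes V(\mu')$.
\item Proposition~\ref{GKO}(2) evaluates this scalar once a component is known to occur at a given depth. It says nothing about which depths occur, so it cannot exclude $h=0$.
\item Even at $N=0$ with $\nu'=\lambda'+\mu'$, the scalar is not termwise positive. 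In terms of finite parts, the linear part is nonnegative. The quadratic part is negative in the direction where the finite parts of $\lambda',\mu'$ are proportional to $\ell,m$.
\item On the orbit (finite parts $\ell\omega_j$, $m\omega_j$, $(\ell+m)\omega_j$ with $\Lambda_j$ in the orbit of $\Lambda_0$), these two parts cancel exactly. Showing that they cancel nowhere else, and controlling all $N>0$, is genuine work that your proposal does not do.
\end{itemize}

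This matters because any further triple with $h=0$ contributes $S_{\lambda\lambda'}S_{\mu\mu'}\overline{S_{\nu\nu'}}$, times a positive multiplicity, to the coefficient of $q^{-c_{GKO}/24}$. Its phase is uncontrolled, so it could cancel the orbit contribution $|P/Q|\,S_{\lambda,\ell\Lambda_0}S_{\mu,m\Lambda_0}S_{\nu,(\ell+m)\Lambda_0}$. In that case the nonvanishing of $b^{\lambda\mu}_\nu$ would not follow.

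What is missing is a uniqueness-of-vacuum statement for the coset. Namely: if a $\mathfrak{g}'$-highest weight vector $v\in V(\lambda')\otimes V(\mu')$ has $L_0^{GKO}v=0$ (hence $L_{-1}^{GKO}v=0$ by unitarity), then $(\lambda',\mu',\nu')$ lies in the orbit of the vacuum triple. You need to prove this or cite it precisely; it is the real content of Step 4.

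A smaller point: the lemma asks for $k\geq 0$, whereas $b^{\lambda\mu}_\nu\neq 0$ only gives some $k\in\mathbb{Z}$. This is easily repaired. The blow-up of $b^{\lambda\mu}_\nu$ as $q\to 1$ forces infinitely many nonzero coefficients; alternatively, use Proposition~\ref{deltaMax}.
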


\subsection{The Virasoro action on $V(m \rho) \otimes V(n\rho)$}

We specialize to the case of $V(m\rho) \otimes V(n\rho)$. Consider any dominant integral weight of the form $\lambda=m\rho + \beta$ where $\beta$ is a weight of $V(n\rho)$. To apply Proposition \ref{deltaMax}, we carefully compute and arrive at the following lemma. 

\begin{lemma} \label{GKOIneq}
For $\lambda$ as above, $$\frac{1}{2h^\vee} \left( \frac{(m\rho | (m+2)\rho)}{m+1} + \frac{(n\rho | (n+2)\rho)}{n+1} - \frac{(\lambda | \lambda+2\rho)}{m+n+1} \right) >0.$$ 
Consequently, for any component $V(\lambda) \subseteq V(m\rho) \otimes V(n\rho)$ (not necessarily $\delta$-maximal), the operator $L_0^{GKO}$ acts by a positive scalar. 
\end{lemma}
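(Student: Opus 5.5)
The plan is to reduce everything to the single quantity $|\rho|^2 := (\rho|\rho)$, using that $\rho$ has level $h^\vee$. Since $\rho(c)=h^\vee$, the weights $m\rho$, $n\rho$ and $\lambda=m\rho+\beta$ have levels $mh^\vee$, $nh^\vee$ and $(m+n)h^\vee$. Here $\beta$, being a weight of $V(n\rho)$, has level $nh^\vee$. So the denominators in Proposition \ref{GKO}(2) become $(m+1)h^\vee$, $(n+1)h^\vee$ and $(m+n+1)h^\vee$, which explains the prefactor $\frac{1}{2h^\vee}$. Throughout I assume $m\ge n\ge 1$, since positive levels are required in Proposition \ref{GKO}. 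For $n=0$ the expression is identically zero and the statement is vacuous.

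\textbf{The first two terms.} They are explicit: $(m\rho|(m+2)\rho)/(m+1)=f(m)|\rho|^2$ with $f(x)=\frac{x(x+2)}{x+1}=x+1-\frac{1}{x+1}$, and likewise $f(n)|\rho|^2$. Note that $|\rho|^2>0$ in the affine case. Indeed $\rho$ has no $\delta$-free $\Lambda_0$-cross term issue: $(\rho|\rho)=|\bar\rho|^2$, computed via the decomposition $\rho=h^\vee\Lambda_0+\bar\rho$ with $(\Lambda_0|\Lambda_0)=0$.

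\textbf{Bounding the third term (main step).} I need an upper bound for $(\lambda|\lambda+2\rho)=|\lambda+\rho|^2-|\rho|^2$. Expand
\[
|\lambda+\rho|^2=(m+1)^2|\rho|^2+2(m+1)(\rho|\beta)+|\beta|^2 .
\]
Two facts are used here.
\begin{itemize}
\item Since $n\rho-\beta\in Q_+$ and $(\rho|\alpha_i)=\tfrac12(\alpha_i|\alpha_i)>0$ for every simple root, we get $(\rho|\beta)\le n|\rho|^2$.
\item For any weight $\beta$ of an integrable highest weight module $V(\Lambda)$ one has $|\beta|^2\le|\Lambda|^2$ (cf.\ Kac, Prop.\ 11.4). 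Applied to $\Lambda=n\rho$ this gives $|\beta|^2\le n^2|\rho|^2$. This is the step to check most carefully in the affine setting.
\end{itemize}
Together these give $|\lambda+\rho|^2\le (m+n+1)^2|\rho|^2$. Hence
\[
\frac{(\lambda|\lambda+2\rho)}{m+n+1}\le f(m+n)|\rho|^2 .
\]

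\textbf{Conclusion.} The expression in the lemma is therefore at least
\[
\frac{|\rho|^2}{2h^\vee}\bigl(f(m)+f(n)-f(m+n)\bigr)=\frac{|\rho|^2}{2h^\vee}\Bigl(\frac{mn-1}{(m+1)(n+1)}+\frac{1}{m+n+1}\Bigr),
\]
which is strictly positive for $m,n\ge1$.

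For the consequence, let $V(\lambda)\subseteq V(m\rho)\otimes V(n\rho)$ be any component. By the necessary condition recalled in the introduction, $\lambda=m\rho+\beta$ with $\beta$ a weight of $V(n\rho)$. The Casimir $\Omega$ acts on $V(\lambda)$ by $(\lambda|\lambda+2\rho)$. Substituting this into Proposition \ref{GKO}(2) shows that $L_0^{GKO}$ acts on $V(\lambda)$ by exactly the scalar above, which is positive.
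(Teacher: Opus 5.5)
Your proof is correct and is essentially the paper's argument. The paper clears the denominator $D=2h^\vee(m+1)(n+1)(m+n+1)$ and writes the quantity as $\frac{1}{D}\bigl((m+1)(n+1)(2\rho|n\rho-\beta)+(m+1)(n+1)\{(n\rho|n\rho)-(\beta|\beta)\}+2(m+1)(n+1)(m\rho|n\rho-\beta)+(m+n+2)(m\rho|n\rho)\bigr)$. It uses exactly your inputs ($n\rho-\beta\in Q^+$ paired with a dominant weight, and Kac Prop. 11.4), and its last term is precisely your $\frac{|\rho|^2}{2h^\vee}\bigl(f(m)+f(n)-f(m+n)\bigr)$, since $f(m)+f(n)-f(m+n)=\frac{mn(m+n+2)}{(m+1)(n+1)(m+n+1)}$.

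One small correction: at $n=0$ the expression equals $0$, so the strict inequality genuinely fails there rather than the statement being vacuous. The lemma must therefore be read with $n\ge1$, which the paper also tacitly assumes when it asserts $(m\rho|n\rho)>0$.
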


\begin{proof}
    Set $D:=2h^\vee (m+1)(n+1)(m+n+1)$. Then one can expand the above express to get that $L_0^{GKO}$ acts by 
    $$
    \begin{aligned}
    \frac{1}{D} ( (m+1)(n+1)(2\rho | n\rho-\beta) &+ (m+1)(n+1) \{ (n\rho | n\rho)-(\beta|\beta)\} \\
    &+2(m+1)(n+1)(m\rho | n\rho-\beta) + (m+n+2)(m\rho | n\rho)).
    \end{aligned}
    $$
    Now, $(2\rho | n\rho-\beta) \geq 0$ since $2\rho \in P^+$ and $n\rho-\beta \in Q^+$; $(n\rho | n\rho) - (\beta | \beta) \geq 0$ since $\beta$ is a weight of $V(n\rho)$ (cf. \cite{Kac} Proposition 11.4); $(m\rho | n\rho-\beta) \geq 0$ as $n\rho-\beta \in Q^+$ and $m\rho \in P^+$; and finally $(m\rho | n\rho) >0$ by direct computation. Thus in total, this scalar is strictly positive. 
\end{proof}

Note that in the proof of Lemma \ref{GKOIneq}, there was no dependence on where the weight $\beta$ appeared in the $\delta$-string of weights in $V(n\rho)$. As shifting by $\delta$ does not affect the dominance of $\lambda$, an immediate consequence of Lemma \ref{GKOIneq} and Proposition \ref{deltaMax} is the following equivalent statement of Conjecture \ref{RekhaConj} for the affine setting. 

\begin{conjecture} \label{affConjRestate}
Let $m \geq n \geq 0$ and $\lambda=m\rho+\beta$ where $\beta$ is a $\delta$-maximal weight of $V(n\rho)$. Then $V(\lambda) \subseteq V(m\rho) \otimes V(n\rho)$. Equivalently, the $\delta$-maximal components $V(\lambda) \subseteq V(m\rho) \otimes V(n\rho)$ are precisely those $\lambda$ of the form $\lambda=m\rho+\beta$ where $\beta$ is a $\delta$-maximal weight of $V(n\rho)$. 
\end{conjecture}

In general, to prove Conjecture \ref{affConjRestate} one needs to demonstrate the existence of such a component in the tensor product. When $\mathfrak{g}=\widehat{sl}_{2}(\mathbb{C})$, this is handled by Brown-Kumar \cite{BK}, and allows us to prove Theorem \ref{MainTheorem}(2).

\begin{proof}[Proof of Theorem \ref{MainTheorem}(2)]
Let $\mathfrak{g}=\widehat{sl}_2(\mathbb{C})$, and let $\lambda=m\rho+\beta$ be a dominant weight, where $\beta$ is a $\delta$-maximal weight of $V(n\rho)$. By (\cite{BK} Theorem 6.1), the $\delta$-maximal component $V(\lambda+N\delta) \subseteq V(m\rho) \otimes V(n\rho)$ is given by the minimum $N=\mathrm{min}\{k_1, k_2\}$ where $k_1, k_2 \in \mathbb{Z}$ are such that $\lambda-m\rho+k_1 \delta$ is a $\delta$-maximal weight in $V(n\rho)$, and similarly $k_2$ is such that $\lambda-n\rho+k_2\delta$ is $\delta$-maximal in $V(m\rho)$. By assumption on $\beta$, immediately we have that $k_1=0$. Similarly, $k_2 \geq 0$, as $m\rho-(n\rho-\beta)$ is a weight of $V(m\rho)$ and is either already $\delta$-maximal, or otherwise it sits below some $m\rho-(n\rho-\beta)+k_2\delta$ for $k_2 \geq 1$. In particular, $N=0$ and thus $V(\lambda) \subseteq V(m\rho) \otimes V(n\rho)$, as desired.  
\end{proof}
 
To prove Conjecture \ref{affConjRestate} in general, it would be advantageous to have a similar uniform description of all possible $\delta$-maximal components in tensor products. Towards this goal, we prove the following partial result for $\mathfrak{g}=\widehat{sl}_{r+1}(\mathbb{C})$. This produces select $\delta$-maximal components of $V(m\rho) \otimes V(n\rho)$ of the desired form in this setting, as a consequence of the validity of Conjecture \ref{RekhaConj} for $sl_{r+1}(\mathbb{C})$.

\begin{proposition}
    Let $\mathfrak{g}=\widehat{sl}_{r+1}(\mathbb{C})$. Let $\lambda=m\rho+\beta$, where $\beta$ is a \textbf{dominant} $\delta$-maximal weight of the representation $V(n\rho)$. Then $V(\lambda) \subseteq V(m\rho) \otimes V(n\rho)$. 
\end{proposition}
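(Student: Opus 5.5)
The plan is to reduce the unequal-parameter statement to the equal-parameter (diagonal) case by a semigroup argument. Write $m=n+(m-n)$ and split
$$
\lambda \;=\; m\rho+\beta \;=\; (m-n)\rho + (n\rho+\beta).
$$
The goal is then to produce the two inclusions
$$
V(n\rho+\beta)\subseteq V(n\rho)\otimes V(n\rho)
\qquad\text{and}\qquad
V((m-n)\rho)\subseteq V((m-n)\rho)\otimes V(0),
$$
and to add them. The second inclusion is trivial. For the sum, I would use the semigroup property of the tensor product support: if $V(\nu_1)\subseteq V(\lambda_1)\otimes V(\mu_1)$ and $V(\nu_2)\subseteq V(\lambda_2)\otimes V(\mu_2)$, then $V(\nu_1+\nu_2)\subseteq V(\lambda_1+\lambda_2)\otimes V(\mu_1+\mu_2)$. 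I would justify this for $\widehat{sl}_{r+1}(\mathbb{C})$ in the standard geometric way, following Kumar's book and as used in \cite{BK, JK}. Identify $\mathrm{Hom}_{\mathfrak g}(V(\nu),V(\lambda)\otimes V(\mu))$ with the dual of $H^0(X, \mathcal{L}(\lambda\boxtimes\mu\boxtimes\nu^\ast))^{G}$ on a product of (thick/thin) flag varieties. Invariant sections then multiply, and the product of two nonzero sections on an irreducible ind-variety is nonzero.

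For the diagonal inclusion, note first that $n\rho+\beta$ is dominant, since $\beta$ is dominant. Because $\beta$ is a weight of $V(n\rho)$, the difference $2n\rho-(n\rho+\beta)=n\rho-\beta$ lies in $Q^+$, so $n\rho+\beta\le 2n\rho$. This is exactly the hypothesis of the $n$-scaled Kostant statement for affine $sl_{r+1}$, which \cite{JK} prove without saturation factor; the introduction notes that the same proof applies to $V(n\rho)\otimes V(n\rho)$. So I would invoke that result directly. If one wishes instead to stay within the Virasoro framework of this section, one can argue as follows. Lemma \ref{GKOIneq} (with $m=n$) and Proposition \ref{deltaMax}(1) show that, once some member of the $\delta$-string occurs, every component below the $\delta$-maximal one occurs. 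So it suffices to locate the $\delta$-maximal component of the string through $n\rho+\beta$ at or above $n\rho+\beta$. Lemma \ref{KWLem} guarantees that the string is nonempty.

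The main obstacle I expect is precisely this last point: showing that the $\delta$-maximal component through $n\rho+\beta$ (equivalently, through $\lambda$ in $V(m\rho)\otimes V(n\rho)$) is not strictly below the weight itself. Here the dominance and $\delta$-maximality of $\beta$ would be used. Dominant $\delta$-maximal weights of the level-$n(r+1)$ module $V(n\rho)$ are maximal dominant weights, and for $\widehat{sl}_{r+1}$ these are finitely many up to $\delta$-shifts. I would try to express each as $n\rho-\gamma$ with $\gamma$ coming from the finite root lattice of $sl_{r+1}$, plus a controlled multiple of $\delta$. Then I would compare against the finite-type result, namely Theorem \ref{MainTheorem}(1) for $sl_{r+1}$ with $d=1$, applied to the classical parts $m\mathring\rho$, $n\mathring\rho$, $\mathring\lambda$. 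To transport a finite highest weight vector into a $\mathfrak g$-highest weight vector of the correct $\delta$-degree, I would use the degree-zero (top $L_0$-eigenspace) part of $V(m\rho)\otimes V(n\rho)$. Checking that this transported vector is also annihilated by $e_0$ is the delicate step. Dominance of $\beta$ should be what makes it work, since it forces the relevant weight spaces to sit in the top $\delta$-degree.
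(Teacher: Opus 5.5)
Your main route is correct, but it differs from the paper's.

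You reduce to the diagonal case. Dominance of $\beta$ makes $n\rho+\beta$ dominant with $n\rho+\beta\le 2n\rho$. The $n$-scaled affine Kostant statement for $\widehat{sl}_{r+1}(\mathbb{C})$ then gives $V(n\rho+\beta)\subseteq V(n\rho)\otimes V(n\rho)$. The semigroup property with the trivial triple $((m-n)\rho,0,(m-n)\rho)$ shifts this to $V(\lambda)\subseteq V(m\rho)\otimes V(n\rho)$.

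What this buys is a clear explanation of why dominance of $\beta$ is the decisive hypothesis. It is exactly what keeps $n\rho+\beta$ dominant. For general $\beta$ this fails, and that is where the unequal case stops being a formal consequence of the diagonal one. Your route also never uses $\delta$-maximality.

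The price is that all the substance is outsourced to the diagonal affine result. \cite{JK} prove it for $V(\rho)\otimes V(\rho)$, and the paper only asserts the general-$n$ version ``by the same proofs''. Establishing it takes essentially the argument the paper gives here with $m=n$, so your route relocates the work rather than removing it.

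The paper argues directly:
\begin{enumerate}
\item Brown--Kumar's Proposition 4.4 gives a vanishing coefficient in $n\rho-\beta=\sum_i c_i\alpha_i$.
\item The cyclic diagram automorphism, which fixes $\rho$, lets one assume $c_0=0$.
\item Theorem \ref{MainTheorem}(1) for $sl_{r+1}(\mathbb{C})$, with $d=1$ and unequal parameters, then produces an $sl_{r+1}$-highest weight vector of weight $\lambda$.
\item $e_0$ kills this vector for weight reasons: $(m+n)\rho-\lambda\in\mathring{Q}^+$ forces $\lambda+\alpha_0\not\le(m+n)\rho$.
\end{enumerate}
This is self-contained given Brown--Kumar and the finite-type theorem.

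Your sketched Virasoro alternative lacks exactly this input. Dominance alone does not place $\beta$ in the top degree. For instance, $\beta-\delta$ is still a dominant weight of $V(n\rho)$, yet every coefficient of $n\rho-(\beta-\delta)$ is positive. What places $\beta$ in the top degree is dominance together with $\delta$-maximality, via Brown--Kumar's Proposition 4.4, followed by the diagram rotation. Once $c_0=0$, the $e_0$ step you flag as delicate is immediate. No $\delta$-string argument is needed either, since $\lambda$ itself is the component sought.
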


\begin{proof}
    By (\cite{BK} Proposition 4.4), if $\beta \in P^+$ is a $\delta$-maximal weight of $V(n\rho)$, then in the expansion 
    $$
    n\rho-\beta=\sum_{i=0}^r c_i \alpha_i
    $$
    there must be some $c_i$ with $c_i=0$. By symmetry of the Dynkin diagram of $\widehat{sl}_{r+1}(\mathbb{C})$, along with the fact that $m\rho$ and $n\rho$ are also stable under the diagram symmetry, without loss of generality assume $c_0=0$ so that $n\rho-\beta \in \mathring{Q}^+$, the positive part of the finite root lattice. 

    Let $\mathring{V}(m\rho)$ be the $sl_{r+1}(\mathbb{C})$ submodule of $V(m\rho)$ generated by the highest weight vector, and similarly for $\mathring{V}(n\rho)$. Then as $n\rho-\beta \in \mathring{Q}^+$, applying Theorem \ref{MainTheorem}(1) we get that 
    $$
    \mathring{V}(\lambda) \subseteq \mathring{V}(m\rho) \otimes \mathring{V}(n\rho).
    $$
    Let $v_\lambda$ be the $sl_{r+1}(\mathbb{C})$ highest weight vector of $\mathring{V}(\lambda)$. Then in fact this is a $\mathfrak{g}$ highest weight vector; $e_0.v_\lambda=0$ since $(m+n)\rho-\lambda \in \mathring{Q}^+$. Thus this vector generates the subrepresentation 
    $$
    V(\lambda) \subseteq V(m\rho) \otimes V(n\rho)
    $$
    as desired. 
\end{proof}

\section*{Acknowledgments}
R.B. gratefully acknowledges Arkady Berenstein for insightful discussions during her visit to Institut des Hautes Études Scientifiques in summer 2025, which played an important role in the formulation of the main conjecture of this paper. She also thanks Institut des Hautes Études Scientifiques for its warm hospitality and excellent working environment, where part of this work was carried out. Financial support from the National Institute of Science Education and Research, Bhubaneswar, and the Homi Bhabha National Institute, Mumbai, is also gratefully acknowledged.

\section*{Appendix: Examples in Support of Conjecture \ref{RekhaConj}} 
We give here some select examples to support Conjecture \ref{RekhaConj} beyond $sl_{n+1}(\mathbb{C})$. We also highlight in the final example the relationship between Conjecture \ref{RekhaConj} and the problem of embedding tensor products of irreducible representations. 

\begin{example}
Let $\mathfrak g$ be of type $B_2$, and consider the tensor product
\[
V(5\rho)\otimes V(2\rho).
\]
Since $\rho=\omega_1+\omega_2$, this corresponds to
\[
V(5,5)\otimes V(2,2)
\]
in fundamental weight coordinates.

Using SageMath, we compute all dominant weights of the form
\[
\lambda = 5\rho+\beta,
\]
where $\beta$ is a weight of $V(2\rho)$. The resulting dominant weights are
\[
\begin{aligned}
&(3,3), (2,5), (1,7), (5,1), (4,3), (3,5), (2,7), (1,9),\\
&(6,1), (5,3), (4,5), (3,7), (2,9), (1,11), (7,1), (6,3),\\
&(5,5), (4,7), (3,9), (2,11), (8,1), (7,3), (6,5), (5,7),\\
&(4,9), (3,11), (9,1), (8,3), (7,5), (6,7), (5,9), (9,3),\\
&(8,5), (7,7).
\end{aligned}
\]

Direct computation in Sage gives the decomposition
\[
\begin{aligned}
V(5,5)\otimes V(2,2)
={}&V(3,3)\oplus V(2,5)\oplus V(1,7)\oplus V(5,1)
\oplus 2V(4,3)\\
&\oplus 3V(3,5)\oplus 2V(2,7)\oplus V(1,9)
\oplus 2V(6,1)\\
&\oplus 4V(5,3)\oplus 4V(4,5)\oplus 4V(3,7)
\oplus 2V(2,9)\\
&\oplus V(1,11)\oplus 3V(7,1)\oplus 4V(6,3)
\oplus 5V(5,5)\\
&\oplus 4V(4,7)\oplus 3V(3,9)\oplus V(2,11)
\oplus 2V(8,1)\\
&\oplus 4V(7,3)\oplus 4V(6,5)\oplus 4V(5,7)
\oplus 2V(4,9)\\
&\oplus V(3,11)\oplus V(9,1)\oplus 2V(8,3)
\oplus 3V(7,5)\\
&\oplus 2V(6,7)\oplus V(5,9)\oplus V(9,3)
\oplus V(8,5)\oplus V(7,7).
\end{aligned}
\]

In particular, every dominant weight of the form
\[
5\rho+\beta,
\]
with $\beta$ a weight of $V(2\rho)$, appears as a component of
\[
V(5\rho)\otimes V(2\rho).
\]
\end{example}
\begin{example}
Let $\mathfrak g$ be of type $G_2$, and consider the tensor product
\[
V(5\rho)\otimes V(2\rho).
\]
Since $\rho=\omega_1+\omega_2$, this corresponds to
\[
V(5,5)\otimes V(2,2)
\]
in fundamental weight coordinates.

Using SageMath, we compute all dominant weights of the form
\[
\lambda = 5\rho+\beta,
\]
where $\beta$ is a weight of $V(2\rho)$. The resulting dominant weights are
\[
\begin{aligned}
&(3,3),(5,2),(7,1),(0,5),(2,4),(4,3),(6,2),(8,1),(10,0),\\
&(1,5),(3,4),(5,3),(7,2),(9,1),(11,0),(0,6),(2,5),(4,4),\\
&(6,3),(8,2),(10,1),(12,0),(1,6),(3,5),(5,4),(7,3),(9,2),\\
&(11,1),(13,0),(0,7),(2,6),(4,5),(6,4),(8,3),(10,2),(12,1),\\
&(14,0),(1,7),(3,6),(5,5),(7,4),(9,3),(11,2),(13,1),(15,0),\\
&(0,8),(2,7),(4,6),(6,5),(8,4),(10,3),(12,2),(14,1),(1,8),\\
&(3,7),(5,6),(7,5),(9,4),(11,3),(13,2),(15,1),(0,9),(2,8),\\
&(4,7),(6,6),(8,5),(10,4),(12,3),(14,2),(1,9),(3,8),(5,7),\\
&(7,6),(9,5),(11,4),(13,3),(0,10),(2,9),(4,8),(6,7),(8,6),\\
&(10,5),(3,9),(5,8),(7,7).
\end{aligned}
\]

Direct computation in Sage gives the decomposition
\[
\begin{aligned}
V(5,5)\otimes V(2,2)
={}&V(3,3)+V(5,2)+V(7,1)+V(0,5)+2V(2,4)+3V(4,3)\\
&+3V(6,2)+2V(8,1)+V(10,0)+3V(1,5)+6V(3,4)\\
&+7V(5,3)+6V(7,2)+4V(9,1)+2V(11,0)+3V(0,6)\\
&+8V(2,5)+11V(4,4)+11V(6,3)+9V(8,2)+6V(10,1)\\
&+3V(12,0)+8V(1,6)+14V(3,5)+16V(5,4)+15V(7,3)\\
&+11V(9,2)+7V(11,1)+3V(13,0)+5V(0,7)+14V(2,6)\\
&+19V(4,5)+19V(6,4)+16V(8,3)+11V(10,2)+6V(12,1)\\
&+2V(14,0)+11V(1,7)+18V(3,6)+21V(5,5)+19V(7,4)\\
&+15V(9,3)+9V(11,2)+4V(13,1)+V(15,0)+5V(0,8)\\
&+14V(2,7)+19V(4,6)+19V(6,5)+16V(8,4)+11V(10,3)\\
&+6V(12,2)+2V(14,1)+8V(1,8)+14V(3,7)+16V(5,6)\\
&+15V(7,5)+11V(9,4)+7V(11,3)+3V(13,2)+V(15,1)\\
&+3V(0,9)+8V(2,8)+11V(4,7)+11V(6,6)+9V(8,5)\\
&+6V(10,4)+3V(12,3)+V(14,2)+3V(1,9)+6V(3,8)\\
&+7V(5,7)+6V(7,6)+4V(9,5)+2V(11,4)+V(13,3)\\
&+V(0,10)+2V(2,9)+3V(4,8)+3V(6,7)+2V(8,6)\\
&+V(10,5)+V(3,9)+V(5,8)+V(7,7).
\end{aligned}
\]

In particular, every dominant weight of the form
\[
5\rho+\beta,
\]
with $\beta$ a weight of $V(2\rho)$, appears as a component of
\[
V(5\rho)\otimes V(2\rho).
\]
\end{example}

\begin{example}
Let $\mathfrak g$ be the simple Lie algebra of type $G_2$, and let
\[
\rho=\omega_1+\omega_2.
\]
We consider the tensor product
\[
V(4\rho)\otimes V(3\rho)
=
V(4,4)\otimes V(3,3),
\]
where weights are expressed in the basis of fundamental weights.

According to Conjecture~\ref{RekhaConj}, the irreducible module
$V(a,b)$ occurs in
\[
V(4\rho)\otimes V(3\rho)
\]
if and only if
\[
(a,b)=(4,4)+\beta
\]
for some weight $\beta\in \mathrm{wt}(V(3,3))$.

The dominant weights predicted by the conjecture are therefore:
\[
\begin{aligned}
&(1,1),
(3,0),
(0,2),
(2,1),
(4,0),
(1,2),
(3,1),
(5,0),
(0,3),
(2,2),\\
&(4,1),
(6,0),
(1,3),
(3,2),
(5,1),
(7,0),
(0,4),
(2,3),
(4,2),
(6,1),\\
&(8,0),
(1,4),
(3,3),
(5,2),
(7,1),
(9,0),
(0,5),
(2,4),
(4,3),
(6,2),\\
&(8,1),
(10,0),
(1,5),
(3,4),
(5,3),
(7,2),
(9,1),
(11,0),
(0,6),
(2,5),\\
&(4,4),
(6,3),
(8,2),
(10,1),
(12,0),
(1,6),
(3,5),
(5,4),
(7,3),
(9,2),\\
&(11,1),
(13,0),
(0,7),
(2,6),
(4,5),
(6,4),
(8,3),
(10,2),
(12,1),
(14,0),\\
&(1,7),
(3,6),
(5,5),
(7,4),
(9,3),
(11,2),
(13,1),
(15,0),
(0,8),
(2,7),\\
&(4,6),
(6,5),
(8,4),
(10,3),
(12,2),
(14,1),
(16,0),
(1,8),
(3,7),
(5,6),\\
&(7,5),
(9,4),
(11,3),
(13,2),
(15,1),
(17,0),
(0,9),
(2,8),
(4,7),
(6,6),\\
&(8,5),
(10,4),
(12,3),
(14,2),
(16,1),
(1,9),
(3,8),
(5,7),
(7,6),
(9,5),\\
&(11,4),
(13,3),
(0,10),
(2,9),
(4,8),
(6,7),
(8,6),
(10,5),
(1,10),
(3,9),\\
&(5,8),
(7,7).
\end{aligned}
\]

Using {\tt SageMath}, we compute:
\[
\begin{aligned}
V(3,3)\otimes V(4,4)
={}&\,V(1,1)+V(3,0)+V(0,2)+3V(2,1)+3V(4,0)+5V(1,2)+7V(3,1)\\
&+6V(5,0)+3V(0,3)+10V(2,2)+13V(4,1)+9V(6,0)+11V(1,3)\\
&+17V(3,2)+19V(5,1)+13V(7,0)+6V(0,4)+19V(2,3)+27V(4,2)\\
&+25V(6,1)+16V(8,0)+18V(1,4)+29V(3,3)+35V(5,2)+32V(7,1)\\
&+18V(9,0)+9V(0,5)+28V(2,4)+41V(4,3)+42V(6,2)+35V(8,1)\\
&+20V(10,0)+23V(1,5)+38V(3,4)+48V(5,3)+48V(7,2)+36V(9,1)\\
&+19V(11,0)+10V(0,6)+32V(2,5)+48V(4,4)+51V(6,3)+47V(8,2)\\
&+35V(10,1)+17V(12,0)+23V(1,6)+38V(3,5)+49V(5,4)+51V(7,3)\\
&+42V(9,2)+29V(11,1)+14V(13,0)+9V(0,7)+28V(2,6)+42V(4,5)\\
&+45V(6,4)+43V(8,3)+35V(10,2)+22V(12,1)+10V(14,0)+18V(1,7)\\
&+29V(3,6)+37V(5,5)+39V(7,4)+32V(9,3)+24V(11,2)+15V(13,1)\\
&+6V(15,0)+6V(0,8)+19V(2,7)+28V(4,6)+29V(6,5)+27V(8,4)\\
&+22V(10,3)+14V(12,2)+8V(14,1)+3V(16,0)+11V(1,8)+17V(3,7)\\
&+21V(5,6)+21V(7,5)+16V(9,4)+11V(11,3)+7V(13,2)+3V(15,1)\\
&+V(17,0)+3V(0,9)+10V(2,8)+14V(4,7)+13V(6,6)+11V(8,5)\\
&+8V(10,4)+4V(12,3)+2V(14,2)+V(16,1)+5V(1,9)+7V(3,8)\\
&+8V(5,7)+7V(7,6)+4V(9,5)+2V(11,4)+V(13,3)+V(0,10)\\
&+3V(2,9)+4V(4,8)+3V(6,7)+2V(8,6)+V(10,5)+V(1,10)\\
&+V(3,9)+V(5,8)+V(7,7).
\end{aligned}
\]
Hence every dominant weight predicted by the conjecture occurs in the tensor product decomposition, and no additional dominant weights occur. This provides strong computational evidence for Conjecture~\ref{RekhaConj} in type $G_2$.
\end{example}

\begin{remark}
The preceding examples also provide evidence for the Schur positivity phenomenon predicted by Proposition~2. Explicit computations in type $G_2$ show that every irreducible summand occurring in
\[
V(5,5)\otimes V(2,2)
\]
also occurs in
\[
V(3,3)\otimes V(4,4),
\]
with multiplicity at least as large. Consequently,
\[
\mathrm{ch}\,V(3,3)\,\mathrm{ch}\,V(4,4)
-
\mathrm{ch}\,V(5,5)\,\mathrm{ch}\,V(2,2)
\in \mathbb{Z}_{\geq 0}[P]^W.
\]

More precisely, for every dominant weight $\lambda$,
\[
[V(\lambda):V(3,3)\otimes V(4,4)]
\geq
[V(\lambda):V(5,5)\otimes V(2,2)].
\]
Thus the Schur positivity inclusion predicted by Proposition~2 holds in these examples.
\end{remark}

\end{document}